\let\ssection=\section
\renewcommand{\section}{\setcounter{equation}{0}\ssection}
\newcommand {\emptycomment}[1]{}
\newcommand{\bbZ}{\mathbb{Z}}
\newcommand{\Hom}{\mathrm{Hom}}
\newcommand{\Ext}{\mathrm{Ext}}
\newcommand{\half}{\textstyle{\frac{1}{2}}}
\newcommand{\fa}{\mathfrak{a}}
\newcommand{\fb}{\mathfrak{b}}
\chardef\s=110
\chardef\g=103
\newtheorem{thm}{Theorem}%[section]
\newtheorem{lemma}{Lemma}[section]
\newtheorem{prop}[lemma]{Proposition}
\newtheorem{exe}[lemma]{Example}
\newtheorem{defi}[lemma]{Definition}
\def\a{\alpha}
\def\b{\beta}
\def\e{\varepsilon}
\def\g{\gamma}
\def\r{\rho}
\def\wt{\widetilde}
\begin{document}

\title{Universal central extensions of Hom-Lie antialgebras}

\author[T. Zhang]{Tao Zhang}
\address{College of Mathematics and Information Science\\
Henan Engn Lab Big Data Stat Anal \& Optimal Contr\\
Henan Normal University\\
Xinxiang 453007, PR China}
\email{zhangtao@htu.edu.cn}

\author[D. Zhong]{Deshou Zhong}
\address{School of Economics\\
University of Chinese Academy of Social Sciences\\
No.11,Changyu street, Fangshan District, Beijing 102488, PR China}
\email{zhongdeshou@cass.org.cn}

%\author[Juan. Li]{Juan Li}
%\address{College of Mathematics and Information Science\\
%Henan Normal University\\
%Xinxiang 453007, PR China}
%\email{lijuan@htu.edu.cn}

\date{}

\begin{abstract}
  We develop a theory of universal central extensions for Hom-Lie antialgebras.
  It is proved that a Hom-Lie antialgebra admits a universal central extension if and only if it is perfect.
  Moreover, we show that the kernel of the universal central extension is equal to the second homology group with trivial coefficients.
\end{abstract}

\subjclass[2010]{Primary 17D99; Secondary 18G60}

\keywords{Hom-Lie antialgebra; cohomology; universal central extensions}

\maketitle

\thispagestyle{empty}

%\tableofcontents

%%%%%%%%%%%%%%%%%%%%%%%%%%%%%%%%%%%%%%%%%%%%%%%%%%
%%%%%%%%%%%%%%%%%%%%%%%%%%%%%%%%%%%%%%%%%%%%%%%%%%
\section{Introduction}

This is the third of our series papers which are denoted to study the theory of Hom-Lie antialgebras.
The notion of Lie antialgebras was introduced by V. Ovsienko in \cite{Ovs} as an algebraic structure in the context of symplectic and contact geometry of $\bbZ_2$-graded space.  A Lie antialgebra is a $\bbZ_2$-graded vector space $\fa=\fa_0\oplus\fa_1$ where $\fa_0$ is a commutative associative algebra acting on $\fa_1$ as a derivation satisfying some compatible conditions, see also \cite{LM,LO,MG}.
In the first paper \cite{ZZ1}, we introduced the concept of Hom-Lie antialgebras which is a Hom-analogue of Ovsienko's Lie antialgebras.
The general representations and cohomology theory of Hom-Lie antialgebras are developed. For cohomology theory of other types of algebras, see \cite{AEM,HLS,Sheng,Yau,Zhang1,Zhang2}.
In the second paper \cite{ZZ2}, we introduced the notation of crossed module for Hom-Lie antialgebras.
The relationship between the crossed module extensions of Hom-Lie antialgebras and the third cohomology group are investigated.

There arise a question that  can we provide some general construction for nontrivial examples of crossed modules for Hom-Lie antialgebras?
Under what conditions can we  obtain a nonzero  cohomology group?
The present investigation of this paper is devoted to solve these problems.
%We will give a general construction of the nontrivial elements for the second cohomology group and of nontrivial examples of crossed modules.

The problems are partial  solved by using central extension  and universal central extension theory.
It is well known that  universal central extension is an important construction in Mathematics and Physics.
The famous Virasoro algebra is a central extension of the Lie algebra of vector fields  on the circle. 
For the universal central extension of Lie groups,  Loop groups, Lie algebras, Leibniz algebras and  Lie-Rinehart algebras,  see \cite{Neeb, Gar, Gne, Cas, CC, CGL, Loday, WLX}.
For the universal central extension of different Hom-type of algebras, see \cite{CIP, CIP2, CS}.
For the  extension of (Hom-)Lie superalgebras,  see \cite{AF, WPD}.
%In this paper, we develop some  (co)homology theory and a universal central extension theory for Hom-Lie antialgebras.
It is a natural question that if the universal central extension construction can be developed in the theory of Hom-Lie antialgebras.
Thanks to the cohomology theory of Hom-Lie antialgebras developed in \cite{ZZ1}, we are able to introduce the concept of a perfect Hom-Lie antialgebra and construct a canonical one.
Using this, we prove that a Hom-Lie antialgebra admits a universal central extension if and  only if it is perfect.
Moreover, we show that the kernel of the universal central extension is equal to the second homology group with trivial coefficients.

The paper is organized as follows. In Section 2, we give some notations and properties of Hom-Lie antialgebras.
 In Section 3, we define the low dimensional (co)homology theory of Hom-Lie antialgebras. It is proved that very central  extension of Hom-Lie antialgebras give rise to a crossed module.
In Section 4, we study the universal central extension theory of Hom-Lie antialgebras.
We prove our main result that a Hom-Lie antialgebra admits a universal central extension if and only if it is perfect.

Throughout this paper, we work over an algebraically closed field  of characteristic 0.
For a  $\bbZ_2$-graded vector space (vector superspace) $V=V_0\oplus V_1$, we denote by $(V\otimes V)_0=(V_0 \otimes V_0)\oplus (V_1\otimes V_1)$ and $(V\otimes V)_1=(V_0\otimes V_1)\oplus(V_1\otimes V_0)$. A morphism $f=(f_0, f_1) : V \to W$ between two $\bbZ_2$-graded vector spaces $V$ and $W$ is assumed to be grade-preserving linear maps: $f_0:V_0\to W_0$ and $f_1:V_1\to W_1$. A Hom-vector superspace $(V,\a_{V_0},\b_{V_1})$ is a $\bbZ_2$-graded vector space $V=V_0\oplus V_1$ with two linear maps $\a_{V_0}:V_0\to V_0,~\b_{V_1}:V_1\to V_1$.

 %$f (V_0)\subseteq W_0$ and $f (V_1)\subseteq W_1$, and thus can be seen as two linear maps:
%%%%%%%%%%%%%%%%%%%%%%%%%%%%%%%%%%%%%%%%%%%%%
%%%%%%%%%%%%%%%%%%%%%%%%%%%%%%%%%%%%%%%%%%%%%
\section{Hom-Lie antialgebras}\label{defintion}

\begin{defi}\cite{ZZ1}\label{def1}
 A Hom-Lie antialgebra $(\fa,\a,\b)$ is a supercommutative $\bbZ_2$-graded algebra:
$\fa=\fa_0\oplus\fa_1$, together with three structural operations: $\cdot:\fa_0\times\fa_0\to \fa_0, \, \cdot:\fa_0\times\fa_1 \to\fa_1$, $[,]:\fa_1\times\fa_1 \to\fa_0$ and two linear maps $\alpha:\fa_0\rightarrow \fa_0,~\beta:\fa_1\rightarrow \fa_1$,
satisfying the following identities:
\begin{eqnarray}
\label{hanti01}
\a(x_1)\cdot\left(x_2\cdot x_3\right)&=&\left(x_1\cdot x_2\right)\cdot\a(x_3),\\
\label{hanti02}
\a(x_1)\cdot(x_2\cdot y_1)&=&\half(x_1\cdot x_2)\cdot\b(y_1),\\
\label{hanti03}
\a(x_1)\cdot[y_1,y_2]&=&[x_1\cdot y_1,\b(y_2)]\;+\;[\b(y_1),x_1\cdot y_2],\\
\label{hanti04}
\b(y_1)\cdot[y_2,y_3]&+&\b(y_2)\cdot[y_3,y_1]\;+\;\b(y_3)\cdot[y_1,y_2]=0,
\end{eqnarray}
for all $x_1,x_2,x_3\in\fa_0$ and $y_1,y_2,y_3\in\fa_1$.
%We also call the above notation $(\a,\b)$-Hom-Lie antialgebra.
\end{defi}
In the above definition $(\fa,\a,\b)$ is a supercommutative algebra means that $x_1\cdot x_2=x_2\cdot x_1$, $x_1\cdot y_1=y_1\cdot x_1$ and $y_1\cdot y_2=-y_2\cdot y_1$. That is why we denote $y_1\cdot y_2$ by $[y_1,y_2]$ in the above equation  which is slightly different from notations in \cite{Ovs,LM}.

\begin{exe}\cite{ZZ1}
Consider the 3-dimensional Hom-Lie antialgebra $K_3$ as follows.
This algebra has the basis $\{\e; a,b\}$, where $\e$ is even and $a,b$ are odd,
Consider the linear map $(\a,\b):\fa\to \fa$ defined by
$$\a(\e)=\e, \quad \b(a)=\mu a,\quad \b(b)=\mu^{-1} b$$
on the basis elements, then we obtain a Hom-Lie antialgebra structure given by
\begin{eqnarray*}
\label{aslA}
\e\cdot{}\e=\e,\quad
\e\cdot{}a=\half\,\mu\, a,\quad
\e\cdot{}b=\textstyle{\frac{1}{2}}\mu^{-1} b,\quad
{[a,b]}=\half\,\e.
\end{eqnarray*}
\end{exe}

\begin{exe}\cite{ZZ1}
\label{ExMain}
Another example of a Hom-Lie antialgebra is the
\textit{conformal Hom-Lie antialgebra}  $K(1)$.
This is a simple infinite-dimensional Hom-Lie antialgebra with the basis
$$
\textstyle
\left\{
\e_n,\;n\in\bbZ;
~a_i, ~i\in\bbZ+\half
\right\},
$$
where $\e_n$ are even, $a_i$ are odd, and
$\a(\e_i)=\e_i$, $\b(a_i)=(1+q^i) a_i$
satisfy the following relations:
\begin{eqnarray*}
\label{GhosRel}
\e_n\cdot{}\e_m&=&\e_{n+m},\\
\e_n\cdot{}a_i &=&\half (1+q^i) a_{n+i},\\
{[a_i, a_j]}&=&\half\left(\{j\}-\{i\}\right)\e_{i+j},
\end{eqnarray*}
where $\{i\}=(q^i-1)/(q-1),\  q\neq 1$.
\end{exe}

A Hom-Lie antialgebra is called a multiplicative Hom-Lie antialgebra if $\a,~\b$ are algebraic morphisms, i.e. for any $x_i\in\fa_0$, $y_i\in\fa_1$, we have
\begin{eqnarray*}
\a(x_1\cdot x_2)&=&\a(x_1)\cdot\a(x_2),\\
\b(x_1\cdot y_1)&=&\a(x_1)\cdot\b(y_1),\\
\a([y_1,y_2])&=&[\b(y_1),\b(y_2)].
\end{eqnarray*}

%The Hom-Lie antialgebras in the following are assumed to be multiplicative.

\begin{defi}
Let $(\fa,\alpha,\beta)$ be a Hom-Lie antialgebra. A Hom-Lie sub-antialgebra $\fb$ is a subspace of $\fa$, which is closed for the structural operations and invariant by $\alpha,\beta$, that is,
\begin{enumerate}
%\item [(i)] $x_1 \cdot x_2 \in\fb_0, x_1 \cdot y_1\in\fb_1, [y_1,y_2] \in \fb_0, \forall x_1,x_2\in \fb_0, y_1,y_2\in \fb_1$,
%\item [(ii)] $\alpha(x) \in \fb_0$, $\beta(y)\in \fb_1, \forall x\in \fb_0, y\in \fb_1$.
\item [(i)] $\fb_0 \cdot \fb_0\subseteq\fb_0, \fb_0 \cdot \fb_1\subseteq\fb_1, [\fb_1,\fb_1]\subseteq \fb_0$,
\item [(ii)] $\alpha(\fb_0)\subseteq \fb_0, \beta(\fb_1)\subseteq \fb_1$.
\end{enumerate}
We call a Hom-Lie sub-antialgebra $\fb$ of $\fa$ to be an ideal  if
\begin{enumerate}
\item [(iii)] $\fb_0 \cdot \fa_0\subseteq\fb_0,\, \fb_0 \cdot \fa_1\subseteq\fb_1,\, \fb_1 \cdot \fa_0\subseteq\fb_1$ and $[\fb_1, \fa_1]\subseteq \fb_0$,
\end{enumerate}
where we denote by $\fb_0 \cdot \fb_0=\{x_1\cdot x_2|\forall x_1, x_2\in \fb_0\}$, $\fb_0 \cdot \fa_1=\{x_1\cdot y_1|\forall x_1\in \fb_0, \forall y_1\in \fa_1\}$, $[\fb_1, \fa_1]=\{[y_1,y_2]|\forall y_1\in \fb_1,y_2\in \fa_1\}$, etc.
%\item [(iii)] $x_1 \cdot x_2 \in\fb_0, \forall x_1 \in\fb_0,x_2\in \fa_0, x_1 \cdot y_1\in\fb_1, \forall x_1\in \fb_0, y_1\in \fb_1,
% [y_1,y_2] \in \fb_0, \forall y_1\in \fb_1,y_2\in \fa_1$.
\end{defi}

\emptycomment{
\begin{defi}
The center of a  Hom-Lie antialgebra $(\fa,\alpha,\beta)$ is the subspace
$$Z(\fa) = \{ (x_1,y_1) \in \fa \mid x_1 \cdot x_2 =0, x_1 \cdot y_2=0, [y_1,y_2]=0, \forall x_2, y_2\in \fa\}$$
\end{defi}
It is easy to see that center $Z(\fa)$ is a Hom-Lie sub-antialgebra of a  Hom-Lie antialgebra $(\fa,\alpha,\beta)$.

When $\a=\b$, we simply call it $\a$-Hom-Lie antialgebra.

An equivalent definition is as follows.
A Hom-$\bbZ_2$-graded commutative algebra $\fa$ is a Hom-Lie antialgebra if and only if the following three conditions are satisfied.

\begin{enumerate}
\item
The subalgebra $\fa_0\subset\fa$ is Hom-associative.

\item
$\fa_0$ acts Hom-commutatively on $\fa_1$:
$\a(x_1)\cdot(x_2\cdot y)=\a(x_2)\cdot(x_1\cdot y)$.

\item
For every $y_i\in{}\fa_1$, the operator of right multiplication by $y$
is an \textit{odd} derivation of $\fa$.
\end{enumerate}
}

%%%%%%%%%%%%%%%%%%%%%%%%
\begin{defi}
Let $(\fa,\a,\b)$ and $(\fa',\a',\b')$ be two Hom-Lie antialgebras. A Hom-Lie antialgebra homomorphism $\phi:\fa\to \fa'$ consists of  linear maps $\phi_0:\fa_0\rightarrow\fa'_0$, $\phi_1:\fa_1\rightarrow\fa'_1$, such that the following equalities hold for all $x_1,x_2\in\fa_0,~y_1,y_2\in\fa_1$:
\begin{eqnarray}
\label{homo01}
\phi_0\circ\a&=&\a'\circ\phi_0,\\
\label{homo02}
\phi_1\circ\b&=&\b'\circ\phi_1,\\
\label{homo1}
\phi_0(x_1\cdot x_2)&=&\phi_0(x_1)\cdot\phi_0(x_2),\\
\label{homo2}
\phi_1(x_1\cdot y_1)&=&\phi_0(x_1)\cdot\phi_1(y_1),\\
\label{homo3}
\phi_0([y_1,y_2])&=&[\phi_1(y_1),\phi_1(y_2)],
\end{eqnarray}
where in the right hand side of \eqref{homo1}--\eqref{homo3} the structural operations $\cdot'$ and $[,]'$ in $(\fa',\a',\b')$ are denoted by $\cdot$ and $[,]$ for simplicity  if it does not cause confusion.
\end{defi}

Given two Hom-Lie antialgebras $(\fa,\alpha,\beta)$ and
$(\fa',\alpha',\beta')$, there is a Hom-Lie antialgebra
$(\fa\oplus\fa',\alpha+\alpha',\beta+\beta')$,
where the structural operations are given by
\begin{eqnarray*}
 {(x_1,x'_1)\cdot(x_2,x'_2)}&=&(x_1\cdot x_2,x'_1\cdot x'_2),\\
 {(x_1,x'_1)\cdot(y_1,y'_1)}&=&(x_1\cdot y_1,x'_1\cdot y'_1),\\
  { [(y_1,y'_1),(y_2,y'_2)]}&=&([y_1,y_2],[y'_1,y'_2]),
\end{eqnarray*}
and the linear maps $\alpha+\alpha',\beta+\beta':\fa\oplus\fa'\to \fa\oplus\fa'$ are given by
\begin{eqnarray*}
(\alpha+\alpha')(x_1,x_1')&=&(\alpha(x_1),\alpha'(x_1')),\\
(\beta+\beta')(y_1,y_1')&=&(\beta(y_1),\beta'(y_1')),
\end{eqnarray*}
for all $x_1,x_2\in\fa_0, ~y_1,y_2\in\fa_1$ and $x'_1,x'_2\in\fa'_0, ~y'_1,y'_2\in\fa'_1$.

Denote by $G^\phi_0=\{(x,\phi_0(x)\}\subset\fa_0\oplus\fa'_0, G^\phi_1=\{(y,\phi_1(y)\}\subset\fa_1\oplus\fa'_1$  the graph of linear maps $\phi_0:\fa_0\to \fa'_0$, $\phi_1:\fa_1\to \fa'_1$. Put $G^\phi= G^\phi_0\oplus G^\phi_1\subseteq\fa\oplus\fa'$.

\begin{prop}
Let $(\fa,\alpha,\beta)$ and $(\fa',\alpha',\beta')$ be two Hom-Lie antialgebras.
 A linear map $\phi:\fa\to \fa'$ is a homomorphism of Hom-Lie antialgebras if and only if the graph
$G^\phi$ is a Hom-Lie sub-antialgebra of $(\fa\oplus\fa',\alpha+\alpha',\beta+\beta')$.
\end{prop}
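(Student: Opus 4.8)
The plan is to verify the two implications together by translating, one at a time, each of the three closure conditions and the two invariance conditions that define a Hom-Lie sub-antialgebra into the corresponding homomorphism identity \eqref{homo01}--\eqref{homo3}. The observation driving both directions at once is that every element of $G^\phi_0$ has the form $(x,\phi_0(x))$ and is uniquely determined by its first component $x\in\fa_0$, and likewise every element of $G^\phi_1$ has the form $(y,\phi_1(y))$; equivalently, the linear map $u\mapsto(u,\phi(u))$ is injective, so a pair lies in $G^\phi$ if and only if its second component is the image under $\phi$ of its first. In particular $G^\phi$ is automatically a graded subspace (being the image of a linear map), so no separate check of closure under addition or scalars is needed, and only the structural and invariance conditions remain.

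First I would compute the three structural products on generic graph elements using the componentwise operations on $\fa\oplus\fa'$. For the even product, $(x_1,\phi_0(x_1))\cdot(x_2,\phi_0(x_2))=(x_1\cdot x_2,\,\phi_0(x_1)\cdot\phi_0(x_2))$, which by the uniqueness observation lies in $G^\phi_0$ precisely when $\phi_0(x_1)\cdot\phi_0(x_2)=\phi_0(x_1\cdot x_2)$, i.e.\ \eqref{homo1}. The mixed product $(x_1,\phi_0(x_1))\cdot(y_1,\phi_1(y_1))=(x_1\cdot y_1,\,\phi_0(x_1)\cdot\phi_1(y_1))$ lies in $G^\phi_1$ exactly when \eqref{homo2} holds, and the bracket $[(y_1,\phi_1(y_1)),(y_2,\phi_1(y_2))]=([y_1,y_2],\,[\phi_1(y_1),\phi_1(y_2)])$ lies in $G^\phi_0$ exactly when \eqref{homo3} holds. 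Next I would treat the invariance conditions: applying $\alpha+\alpha'$ to $(x,\phi_0(x))$ gives $(\alpha(x),\alpha'(\phi_0(x)))$, which lies in $G^\phi_0$ iff $\alpha'\circ\phi_0=\phi_0\circ\alpha$, that is \eqref{homo01}; similarly $(\beta+\beta')(y,\phi_1(y))=(\beta(y),\beta'(\phi_1(y)))\in G^\phi_1$ iff $\beta'\circ\phi_1=\phi_1\circ\beta$, that is \eqref{homo02}.

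Collecting these five equivalences shows that the package of conditions (i)--(ii) defining a Hom-Lie sub-antialgebra for $G^\phi$ coincides, term by term, with the five defining identities of a homomorphism; reading them from left to right yields the ``if'' direction and from right to left the ``only if'' direction. I expect essentially no obstacle here: the sole point requiring care is that each membership statement must hold for \emph{all} $x_1,x_2\in\fa_0$ and $y_1,y_2\in\fa_1$, but this is immediate, since the subalgebra conditions quantify over all elements of the graph and hence over all possible first components, matching exactly the universal quantifiers in \eqref{homo01}--\eqref{homo3}.
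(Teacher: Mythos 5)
Your proposal is correct and follows essentially the same route as the paper: compute the three structural products and the two Hom-maps on generic graph elements and observe that membership in $G^\phi$ is equivalent, component by component, to the identities \eqref{homo01}--\eqref{homo3}. The only difference is presentational — you run both implications simultaneously as five equivalences, while the paper writes out the two directions separately — so there is nothing substantive to add.
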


\begin{proof}
Let $\phi:\fa\to \fa'$ be a homomorphism of Hom-Lie antialgebras, then for any $y_1,y_2\in\fa$, we have
\begin{eqnarray*}
 {(x_1,\phi_0(x_1))\cdot(x_2,\phi_0(x_2))}&=&(x_1\cdot x_2,\phi_0(x_1)\cdot \phi_0(x_2))=(x_1\cdot x_2,\phi_0(x_1\cdot x_2)),\\
 {(x_1,\phi_0(x_1))\cdot(y_1,\phi_1(y_1))}&=&(x_1\cdot y_1,\phi_0(x_1)\cdot \phi_1(y_1))=(x_1\cdot y_1,\phi_0(x_1\cdot y_1)),\\
 {[(y_1,\phi_1(y_1)),(y_2,\phi(y_2))]}&=&([y_1,y_2],[\phi_1(y_1),\phi(y_2)])=([y_1,y_2],\phi[y_1,y_2]).
\end{eqnarray*}
Thus the graph $ G^\phi$ is closed under the structural operations. Furthermore, by
\eqref{homo01} and \eqref{homo02}, we have
\begin{eqnarray*}
(\alpha+\alpha')(x,\phi_0(x))=(\alpha(x),\alpha'\circ\phi_0(x))=(\alpha(x),\phi_0\circ\alpha(x)),\\
(\beta+\beta')(y,\phi_1(y))=(\beta(y),\beta'\circ\phi_1(y))=(\beta(y),\phi_1\circ\beta(y)),
\end{eqnarray*}
which implies that $(\alpha+\alpha')(G^\phi_0)\subset G^\phi_0$ and $(\beta+\beta')(G^\phi_1)\subset G^\phi_1$.
Thus $ G^\phi$ is a Hom-Lie sub-antialgebra of
$(\fa\oplus\fa',\alpha+\alpha',\beta+\beta')$

Conversely, if graph $G^\phi$ is a Hom-Lie sub-antialgebra of $(\fa\oplus\fa',\alpha+\alpha',\beta+\beta')$,
then we have
\begin{eqnarray*}
 {(x_1,\phi_0(x_1))\cdot(x_2,\phi_0(x_2))}&=&(x_1\cdot x_2,\phi_0(x_1)\cdot \phi_0(x_2))\in G^\phi_0,\\
 {(x_1,\phi_0(x_1))\cdot(y_1,\phi_1(y_1))}&=&(x_1\cdot y_1,\phi_0(x_1)\cdot \phi_1(y_1))\in G^\phi_1,\\
 {[(y_1,\phi_1(y_1)),(y_2,\phi(y_2))]}&=&([y_1,y_2],[\phi_1(y_1),\phi(y_2)])\in G^\phi_0,
\end{eqnarray*}
which implies that
\begin{eqnarray*}
 \phi_0(x_1)\cdot\phi_0(x_2)&=& \phi_0(x_1\cdot x_2),\\
 \phi_0(x_1)\cdot\phi_1(y_1)&=& \phi_1(x_1\cdot y_1),\\
 {[\phi_1(y_1),\phi_1(y_2)]}&=&\phi_0([y_1,y_2]).
\end{eqnarray*}
Furthermore, $(\alpha+\alpha')( G^\phi_0)\subset G^\phi_0$  and  $(\beta+\beta')( G^\phi_1)\subset G^\phi_1$ yields that
\begin{eqnarray*}
(\alpha+\alpha')(x,\phi(x))&=&(\alpha(x),\alpha'\circ\phi(x))\in G^\phi_0,\\
(\beta+\beta')(y,\phi(y))&=&(\beta(x),\beta'\circ\phi(y))\in G^\phi_1,
\end{eqnarray*}
which is equivalent to the condition
$\alpha'\circ\phi(x)=\phi\circ\alpha(x)$ and $\b'\circ\phi_1(y)=\phi_1\circ\b(y)$.
Therefore, $\phi:\fa\to \fa'$ is a homomorphism of
Hom-Lie antialgebras.
\end{proof}

\begin{defi}\cite{ZZ2}\label{def-action}
Let $(V,\alpha_{V_0},\beta_{V_1})$ and $(\fa,\alpha,\beta)$ be two Hom-Lie antialgebras.
An action of $(\fa,\alpha,\beta)$ over $(V,\alpha_{V_0},\beta_{V_1})$  is a representation
$(V,\rho)$ such that the following conditions hold:
\begin{eqnarray}
\label{action01}
\rho_0(\alpha(x_1))(u_1\cdot u_2)&=&\rho_0(x_1)(u_1)\cdot\alpha(u_2),\\
\label{action021}
\rho_0(\alpha(x_1))(u_1\cdot v_1)&=&\half\rho_0(x_1)(u_1)\cdot\beta_{V_1}(v_1),\\
\label{action022}
\rho_1(y_1)(u_2)\cdot\alpha_{V_0}(u_1)&=&\half\rho_1(\beta(y_1))(u_1\cdot u_2),\\
\label{action023}
\rho_0(x_1)(v_1)\cdot\alpha_{V_0}(u_1)&=&\half\rho_0(x_1)(u_1)\cdot\beta_{V_1}(v_1),\\
\label{action031}
\rho_0(\alpha(x_1))([v_1,v_2])&=&[\rho_0(x_1)(v_1),\beta_{V_1}(v_2)]+[\beta_{V_1}(v_1),\rho_0(x_1)(v_2)],\\
\label{action032}
\rho_1(\beta(y_1))(u_1\cdot v_1)&=&\alpha_{V_0}(u_1)\cdot\rho_1(y_1)(v_1)-[\rho_1(y_1)(u_1),\beta_{V_1}(v_1)],\\
\label{action04}
\rho_1(\beta(y_1))([v_1,v_2])&=&\beta_{V_1}(v_1)\cdot\rho_1(y_1)( v_2)-\beta_{V_1}(v_2)\cdot \rho_1(y_1)(v_1),
\end{eqnarray}
for all $x_1\in\fa_0,~y_1\in\fa_1$, $u_1,u_2\in V_0,~v_1,v_2\in V_1$.
\end{defi}

\begin{thm}\label{Thm1}\cite{ZZ2}
Let $(V,\alpha_{V_0},\beta_{V_1})$ and $(\fa,\alpha,\beta)$ be two Hom-Lie antialgebras with an action of $(\fa,\alpha,\beta)$ over $(V,\alpha_{V_0},\beta_{V_1})$.
 Then $(\fa\oplus V,\a+\a_{V_0},\b+\b_{V_1})$ is a Hom-Lie antialgebra under the following maps:
\begin{eqnarray}
\label{homo1}(\a+\a_{V_0})(x_1,u_1)&=&(\a(x_1),\a_{V_0}(u_1)),\notag\\
\label{homo2}(\b+\b_{V_1})(y_1,v_1)&=&(\b(y_1),\b_{V_1}(v_1)),\notag\\
\label{op1}
(x_1,u_1)\cdot(x_2,u_2)&=&(x_1\cdot x_2,~ \r_0(x_1)(u_2)+\r_0(x_2)(u_1)
+u_1\cdot u_2),\notag\\
\label{op2}
(x_1,u_1)\cdot(y_1,v_1)&=&(x_1\cdot y_1,~ \r_0(x_1)(v_1)+\r_1(y_1)(u_1)
+u_1\cdot v_1),\notag\\
\label{op3}
[(y_1,v_1),(y_2,v_2)]&=&([y_1,y_2],~ \r_1(y_1)(v_2)-\r_1(y_2)(v_1)
+[v_1,v_2]),\notag
\end{eqnarray}
for all $x_1,x_2\in\fa_0,~y_1,y_2\in\fa_1, u_1,u_2\in V_0, v_1,v_2\in V_1$.
This is called a semidirect product of $\fa$ and $V$,
denoted by $\fa\ltimes V$.
\end{thm}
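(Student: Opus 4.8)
The plan is to verify directly that $(\fa\oplus V,\a+\a_{V_0},\b+\b_{V_1})$, equipped with the three products displayed above, satisfies every requirement of Definition \ref{def1}: supercommutativity of the two products and the bracket, together with the four structural identities \eqref{hanti01}--\eqref{hanti04}. Throughout I would exploit the grading $(\fa\oplus V)_0=\fa_0\oplus V_0$, $(\fa\oplus V)_1=\fa_1\oplus V_1$, the fact that the \emph{first} component of every product is literally the corresponding operation of $\fa$, and the fact that the \emph{second} ($V$-valued) component is assembled from the action maps $\rho_0,\rho_1$ and the products of $V$. The hypothesis that $(V,\rho)$ is an action in the sense of Definition \ref{def-action} gives me two layers of data simultaneously: the representation identities of $(V,\rho)$ from \cite{ZZ1}, and the seven compatibility conditions \eqref{action01}--\eqref{action04}.

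Supercommutativity is the quickest step and I would dispose of it first: the even--even product is symmetric in its pair of $\rho_0$-terms and reduces to $x_1\cdot x_2=x_2\cdot x_1$ in $\fa$ and $u_1\cdot u_2=u_2\cdot u_1$ in $V$; the even--odd product is consistent with $x_1\cdot y_1=y_1\cdot x_1$ because $u_1\cdot v_1=v_1\cdot u_1$ in $V$; and the bracket is antisymmetric since $[y_1,y_2]=-[y_2,y_1]$, $[v_1,v_2]=-[v_2,v_1]$, while its two $\rho_1$-terms interchange with a change of sign.

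For the four structural identities I would substitute general elements of $\fa\oplus V$ into each side and compare componentwise. In each case the first component is precisely the identity \eqref{hanti01}--\eqref{hanti04} already holding in $\fa$, so nothing has to be checked there, and all the work sits in the $V$-component. I would sort the resulting terms into three families: the terms carrying no $\rho$ at all, which cancel by the \emph{same} identity holding in $V$; the terms linear in a single $\rho_0$ or $\rho_1$ whose argument involves only products of $\fa$, which are governed by the representation identities of $(V,\rho)$; and the genuinely mixed terms, in which some $\rho$ is evaluated on a product of $V$, which are exactly where the action conditions enter. Concretely I expect \eqref{hanti01} to consume \eqref{action01}; the half-weighted identity \eqref{hanti02} to consume the three conditions \eqref{action021}, \eqref{action022}, \eqref{action023} (arising from $\rho_0(\a(x_1))(u_2\cdot v_1)$, from $\rho_1(y_1)(u_2)\cdot\a_{V_0}(u_1)$, and from $\rho_0(x_2)(v_1)\cdot\a_{V_0}(u_1)$ respectively, after using supercommutativity of $V$); the mixed bracket identity \eqref{hanti03} to consume \eqref{action031} (via the term $\rho_0(\a(x_1))([v_1,v_2])$) and \eqref{action032} (via the term $\rho_1(\b(y_2))(u_1\cdot v_1)$ produced after expanding $(x_1,u_1)\cdot(y_1,v_1)$); and the Jacobi-type identity \eqref{hanti04} to consume \eqref{action04}.

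The main obstacle is combinatorial rather than conceptual: it is the bookkeeping for \eqref{hanti02} and \eqref{hanti03}, where the factors of $\half$ and the mixing of even and odd arguments generate the largest number of $\rho$-terms, and one must confirm that they pair off against the action conditions with exactly the right coefficients and signs. The content of the verification is precisely that the conditions \eqref{action01}--\eqref{action04} are tuned so that, once the representation identities are inserted, every surviving mixed term cancels; I would carry out the substitutions one identity at a time, matching terms in the order listed above.
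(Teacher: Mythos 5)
The paper does not prove Theorem \ref{Thm1} at all: it is imported verbatim from \cite{ZZ2} and stated without proof, so there is no in-paper argument to compare against. Your plan is nevertheless correct and is the standard direct verification such a statement admits: the first components reduce to the axioms of $\fa$, the $\rho$-free second-component terms reduce to the axioms of $V$, the single-$\rho$ terms with $\fa$-product arguments are absorbed by the representation identities of \cite{ZZ1}, and your matching of the seven action conditions \eqref{action01}--\eqref{action04} to the four identities \eqref{hanti01}--\eqref{hanti04} (one, three, two, one respectively) is exactly right, including the role of supercommutativity in $V$ in invoking \eqref{action022} and \eqref{action023}.
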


\begin{defi}\label{def:crossedmod}\cite{ZZ2}
 Let $(V,\alpha_{V_0},\beta_{V_1})$ and $(\fa,\alpha,\beta)$ be two Hom-Lie antialgebras with an action of  $(\fa,\alpha,\beta)$ over $(V,\alpha_{V_0},\beta_{V_1})$.
A crossed module of Hom-Lie antialgebras $\partial:(V,\alpha_{V_0},\beta_{V_1})\rightarrow(\fa,\alpha,\beta)$ is an Hom-Lie antialgebra homomorphism such that the following identities hold:
\begin{eqnarray}
%\label{com1}
%\partial_0\circ\alpha_{V_0}&=&\alpha\circ\partial_0\\
%\label{com2}
%\partial_1\circ\beta_{V_1}&=&\beta\circ\partial_1\\
\label{cm1}
\partial_0\circ\rho_0(x_1)(u_1)&=&x_1\cdot\partial_0(u_1),\\
\label{cm2}
\partial_1\circ\rho_0(x_1)(v_1)&=&x_1\cdot\partial_1(v_1),\\
\label{cm3}
\partial_1\circ\rho_1(y_1)(u_1)&=&y_1\cdot\partial_0(u_1),\\
\label{cm4}
\partial_0\circ\rho_1(y_1)(v_1)&=&[y_1,\partial_1(v_1)],\\
\label{pei1}
\rho_0(\partial_0(u_1))(u_2)&=&u_1\cdot u_2,\\
\label{pei2}
\rho_1(\partial_1(v_1))(v_2)&=&[v_1,v_2],\\
\label{pei3}
\rho_0(\partial_0(u_1))(v_1)&=&\rho_1(\partial_1(v_1))(u_1)=u_1\cdot v_1,
\end{eqnarray}
for all $x_1\in\fa_0$, $u_1,u_2\in V_0$, $y_1\in\fa_1$, $v_1,v_2\in V_1$.
\end{defi}

We will denote a crossed module by  $(V,\fa, \partial)$  or $\partial:V\rightarrow\fa$ for simplicity.

%%%%%%%%%%%%%%%%%%%%%%%%%%%%%%%%%%%%%%%%%%%%%
\section{Low (co)homology and central extensions}\label{sect:cohomology}

%In this section, we introduce notion of representation of Hom-Lie antialgebras.
%Then we study the semidirect product and cohomology group  of Hom-Lie antialgebras.

Now we introduce some low dimensional homology and cohomology theory for Hom-Lie antialgebra with trivial coefficients.

Let $(\fa,\a,\b)$ be a Hom-Lie antialgebra and denote by $C^n(\fa)=\otimes^n\fa^*$ the $n$-cochain where $\fa^*=\fa_0^*\oplus\fa_1^*$ is dual space of $\fa$. We define the linear maps
$$d^n : C^{n-1}(\fa)\to C^{n}(\fa)(n=1,2)$$
by
$$d^1:\fa^*\to \fa^*\otimes\fa^*,\quad d^2:\fa^*\otimes\fa^*\to \fa^*\otimes\fa^*\otimes\fa^*$$
as follows.

For 1-cochain
\begin{eqnarray*}
\upsilon=(\upsilon_0,\upsilon_1)\in \fa^*,
\end{eqnarray*}
where $\upsilon_0\in\fa_0^*,\,\upsilon_1\in\fa_1^*$, we define
%\begin{eqnarray}
%\label{Thomo1}
%d^1\upsilon(x_1,x_2)&=&\upsilon_0(x_1)\cdot x_2+x_1\cdot \upsilon_0(x_2)-\upsilon_0(x_1\cdot x_2),\\
%\label{Thomo2}
%d^1\upsilon(x_1,y_1)&=&\upsilon_0(x_1)\cdot y_1+x_1\cdot \upsilon_1(y_1)-\upsilon_1(x_1\cdot y_1),\\
%\label{Thomo3}
%d^1\upsilon(y_1,y_2)&=&[\upsilon_1(y_1),y_2]+[y_1,\upsilon_1(y_2)]-\upsilon_0([y_1,y_2]).
%\end{eqnarray}
\begin{eqnarray}
\label{Thomo1}
d^1\upsilon(x_1,x_2)&=&\upsilon_0(x_1\cdot x_2),\\
\label{Thomo2}
d^1\upsilon(x_1,y_1)&=&\upsilon_1(x_1\cdot y_1),\\
\label{Thomo3}
d^1\upsilon(y_1,y_2)&=&\upsilon_0([y_1,y_2]).
\end{eqnarray}
For 2-cochain
\begin{eqnarray*}
\omega=(\omega_0,\omega_1,\omega_2)\in \fa^*\otimes\fa^*,
\end{eqnarray*}
where $\omega_0\in\fa_0^*\otimes \fa_0^*,\quad \omega_1\in\fa_0^*\otimes \fa_1^*,\quad \omega_2\in\fa_1^*\otimes \fa_1^*$, we define
\begin{eqnarray}
d^2\omega(x_1,x_2,x_3)&=&\omega_0(\a(x_1),x_2\cdot x_3)-\omega_0(x_1\cdot x_2,\a(x_3)),\\
\label{cocycle2}
d^2\omega(x_1,x_2,y_1)&=&\omega_1(\a(x_1),x_2\cdot y_1)-\half\omega_1(x_1\cdot x_2,\b(y_1)),\\
\label{cocycle3}
\notag d^2\omega(x_1,y_1,y_2)&=&\omega_0(\a(x_1),[y_1,y_2])-\omega_2(x_1\cdot y_1,\b(y_2))\\
&&-\omega_2(\b(y_1),x_1\cdot y_2),\\
\label{cocycle4}
\notag d^2\omega(y_1,y_2,y_3)&=&\omega_1(\b(y_1),[y_2,y_3])+\omega_1(\b(y_2),[y_3,y_1])\\
&&+\omega_1(\b(y_3),[y_1,y_2]).
\end{eqnarray}

\begin{prop}\label{prop:dd=0}
With the above notations, we have $d^{2}\circ d^1=0$.
\end{prop}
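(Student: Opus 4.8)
The plan is to reduce the identity $d^2\circ d^1=0$ to the four defining axioms \eqref{hanti01}--\eqref{hanti04}, matching one axiom to each arity of the resulting $3$-cochain. First I would fix a $1$-cochain $\upsilon=(\upsilon_0,\upsilon_1)$ and compute $\omega:=d^1\upsilon$ explicitly as a $2$-cochain $\omega=(\omega_0,\omega_1,\omega_2)$. Reading off \eqref{Thomo1}--\eqref{Thomo3}, its three components are $\omega_0(x_1,x_2)=\upsilon_0(x_1\cdot x_2)$, $\omega_1(x_1,y_1)=\upsilon_1(x_1\cdot y_1)$ and $\omega_2(y_1,y_2)=\upsilon_0([y_1,y_2])$. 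The whole proof is then a substitution of these three formulas into the definition of $d^2\omega$.

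Next I would feed $\omega$ into the four coboundary formulas (the unlabeled one for three even arguments together with \eqref{cocycle2}--\eqref{cocycle4}) and evaluate the four arities $(x_1,x_2,x_3)$, $(x_1,x_2,y_1)$, $(x_1,y_1,y_2)$, $(y_1,y_2,y_3)$ in turn. The key observation, which is exactly what makes each expression collapse, is that after substitution each value of $d^2(d^1\upsilon)$ is $\upsilon_0$ or $\upsilon_1$ applied to the difference of the two sides of a single structural axiom. Concretely, using linearity of $\upsilon_0,\upsilon_1$, I expect to obtain
\begin{align*}
d^2\omega(x_1,x_2,x_3)&=\upsilon_0\bigl(\a(x_1)\cdot(x_2\cdot x_3)-(x_1\cdot x_2)\cdot\a(x_3)\bigr),\\
d^2\omega(x_1,x_2,y_1)&=\upsilon_1\bigl(\a(x_1)\cdot(x_2\cdot y_1)-\half(x_1\cdot x_2)\cdot\b(y_1)\bigr),\\
d^2\omega(x_1,y_1,y_2)&=\upsilon_0\bigl(\a(x_1)\cdot[y_1,y_2]-[x_1\cdot y_1,\b(y_2)]-[\b(y_1),x_1\cdot y_2]\bigr),\\
d^2\omega(y_1,y_2,y_3)&=\upsilon_1\bigl(\b(y_1)\cdot[y_2,y_3]+\b(y_2)\cdot[y_3,y_1]+\b(y_3)\cdot[y_1,y_2]\bigr),
\end{align*}
and these vanish by \eqref{hanti01}, \eqref{hanti02}, \eqref{hanti03}, \eqref{hanti04} respectively, for every $x_i\in\fa_0$ and $y_i\in\fa_1$. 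Since the four arities exhaust all homogeneous components of a $3$-cochain, this gives $d^2(d^1\upsilon)=0$, and as $\upsilon$ is arbitrary, $d^2\circ d^1=0$.

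There is no deep obstacle here; the content is a precise bookkeeping match between the combinatorics of the coboundary and the algebra axioms. The two points that require care are the numerical coefficient and the signs: the factor $\half$ appearing in \eqref{cocycle2} must line up exactly with the $\half$ in axiom \eqref{hanti02} for the second component to cancel, and in the third component one must use supercommutativity together with the linearity of $\upsilon_0$ to recombine the two $\omega_2$-terms into the single bracket on the right-hand side of \eqref{hanti03}. Once these are handled, each of the four lines is an immediate consequence of the corresponding axiom, so I would organize the write-up as four short parallel computations rather than a single global manipulation.
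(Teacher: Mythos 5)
Your proposal is correct and follows essentially the same route as the paper: substitute the three components of $d^1\upsilon$ into the four coboundary formulas for $d^2$, use linearity of $\upsilon_0,\upsilon_1$ to collect each arity into $\upsilon$ applied to the difference of the two sides of one axiom among \eqref{hanti01}--\eqref{hanti04}, and conclude that each component vanishes. The bookkeeping you flag (the factor $\half$ and the signs in the mixed arity) is exactly where the cancellation happens, and your third line in fact fixes a sign typo present in the paper's own displayed computation.
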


\begin{proof} By direct computations, we get
\begin{eqnarray*}
&&d^{2}\circ d^1\upsilon(x_1,x_2,x_3)\\
&=&d^1\upsilon_0(\a(x_1),x_2\cdot x_3)-d^1\upsilon_0(x_1\cdot x_2,\a(x_3))\\
&=&\upsilon_0(\a(x_1)\cdot(x_2\cdot x_3)-(x_1\cdot x_2)\cdot\a(x_3))\\
&=&0.
\end{eqnarray*}
Similarly, we have
\begin{eqnarray*}
d^{2}\circ d^1\upsilon(x_1,x_2,y_1)&=&\upsilon_1(\a(x_1)\cdot(x_2\cdot y_1)-\half(x_1\cdot x_2)\cdot\b(y_1))=0,\\
d^{2}\circ d^1\upsilon(x_1,y_1,y_2)&=&\upsilon_0(\a(x_1)\cdot[y_1,y_2])-\upsilon_0([x_1\cdot y_1,\b(y_2)])\\
&&+\upsilon_0([\b(y_1),x_1\cdot y_2])=0,\\
d^{2}\circ d^1\upsilon(y_1,y_2,y_3)&=&\upsilon_1(\b(y_1)\cdot[y_2,y_3])+\upsilon_1(\b(y_2)\cdot[y_3,y_1])\\
&&+\upsilon_1(\b(y_3)\cdot[y_1,y_2])=0.
\end{eqnarray*}
This complete the proof.
\end{proof}

Denote by $C_n(\fa)=\otimes^n\fa$. We define the linear maps
$$d_n : C_n(\fa)\to C_{n-1}(\fa)(n=2,3)$$
$$d_3:\fa\otimes\fa\otimes\fa\to \fa\otimes\fa,\quad d_2:\fa\otimes\fa\to \fa$$
by the following
\begin{align}
d_2(x_1\otimes x_2)&= x_1\cdot x_2,\\
d_2(x_1\otimes y_1)&= x_1\cdot y_1,\\
d_2(y_1\otimes y_2)&= [y_1, y_2],
\end{align}
%%%%%%%%%%%%%%%%%%%%%%%%%%%%%%%%
\begin{eqnarray}\label{cocycle1}
 d_3(x_1 \otimes x_2\otimes x_3)&=&\a(x_1)\otimes x_2\cdot x_3-x_1\cdot x_2 \otimes \a(x_3),\\
\label{cocycle2}
 d_3(x_1 \otimes  x_2 \otimes   y_1)&=&\a(x_1) \otimes  x_2\cdot y_1-\half x_1\cdot x_2 \otimes \b(y_1),\\
\label{cocycle3}
\nonumber d_3(x_1 \otimes y_1 \otimes  y_2)&=&\a(x_1) \otimes [y_1,y_2]-x_1\cdot y_1  \otimes  \b(y_2)\\
&&-\b(y_1) \otimes x_1\cdot y_2,\\
\label{cocycle4}
\nonumber d_3( y_1\otimes y_2\otimes y_3)&=& \b(y_1)\otimes[y_2,y_3] +\b(y_2)\otimes[y_3,y_1]\\
&&+\b(y_3)\otimes[y_1,y_2].
\end{eqnarray}

Since $(\fa,\a,\b)$ is a Hom-Lie antialgebra, we have the following result.
\begin{prop}\label{prop212}
With the above notations, we have $d_{2}\circ d_3=0$.
\end{prop}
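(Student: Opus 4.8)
The plan is to verify $d_2 \circ d_3 = 0$ by direct computation on each of the four types of generators, exactly mirroring the structure of Proposition \ref{prop:dd=0}. For each generator, I will apply $d_3$ to land in $\fa \otimes \fa$, then apply $d_2$ termwise, and collect terms so that the resulting element of $\fa$ is recognized as a Hom-Lie antialgebra axiom set to zero. The key observation is that the four formulas defining $d_3$ are the homological duals of the four cochain formulas \eqref{cocycle2}--\eqref{cocycle4}, so the same four defining identities \eqref{hanti01}--\eqref{hanti04} that forced $d^2 \circ d^1 = 0$ will reappear here.

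Concretely, I would carry out the four cases in order. First, on $x_1 \otimes x_2 \otimes x_3$, applying $d_2$ to $d_3(x_1\otimes x_2 \otimes x_3) = \a(x_1)\otimes (x_2\cdot x_3) - (x_1\cdot x_2)\otimes \a(x_3)$ gives $\a(x_1)\cdot(x_2\cdot x_3) - (x_1\cdot x_2)\cdot\a(x_3)$, which vanishes by \eqref{hanti01}. Second, on $x_1\otimes x_2\otimes y_1$, I obtain $\a(x_1)\cdot(x_2\cdot y_1) - \half(x_1\cdot x_2)\cdot\b(y_1)$, which is zero by \eqref{hanti02}. Third, on $x_1 \otimes y_1 \otimes y_2$, the three terms of $d_3$ yield $\a(x_1)\cdot[y_1,y_2] - [x_1\cdot y_1,\b(y_2)] - [\b(y_1),x_1\cdot y_2]$ (here I must use $d_2(x_1\cdot y_1 \otimes \b(y_2)) = [x_1\cdot y_1,\b(y_2)]$ since both factors are odd), which is exactly \eqref{hanti03} rearranged. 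Fourth, on $y_1\otimes y_2\otimes y_3$, I get $\b(y_1)\cdot[y_2,y_3] + \b(y_2)\cdot[y_3,y_1] + \b(y_3)\cdot[y_1,y_2]$, which is \eqref{hanti04}.

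The one point demanding genuine care, rather than routine bookkeeping, is the parity tracking in the third case: I must make sure that when $d_2$ is applied to a tensor of two odd elements it reads off the bracket $[\,,\,]$ with the correct signs, and that the graded-commutativity convention ($[y_1,y_2] = -[y_2,y_1]$ and $x\cdot y = y\cdot x$) is applied consistently so that the cyclic terms in the fourth case line up with \eqref{hanti04} rather than differing by a sign. Everything else is a mechanical substitution. I expect no serious obstacle beyond this sign discipline, so the proof will simply present the four computations and conclude.

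\begin{proof}
By direct computation on each type of generator. First,
\begin{eqnarray*}
d_2\circ d_3(x_1\otimes x_2\otimes x_3)
&=&d_2\big(\a(x_1)\otimes x_2\cdot x_3-x_1\cdot x_2\otimes\a(x_3)\big)\\
&=&\a(x_1)\cdot(x_2\cdot x_3)-(x_1\cdot x_2)\cdot\a(x_3)=0,
\end{eqnarray*}
by \eqref{hanti01}. Similarly,
\begin{eqnarray*}
d_2\circ d_3(x_1\otimes x_2\otimes y_1)
&=&\a(x_1)\cdot(x_2\cdot y_1)-\half(x_1\cdot x_2)\cdot\b(y_1)=0,
\end{eqnarray*}
by \eqref{hanti02}. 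For the mixed term,
\begin{eqnarray*}
d_2\circ d_3(x_1\otimes y_1\otimes y_2)
&=&\a(x_1)\cdot[y_1,y_2]-[x_1\cdot y_1,\b(y_2)]-[\b(y_1),x_1\cdot y_2]=0,
\end{eqnarray*}
by \eqref{hanti03}. Finally,
\begin{eqnarray*}
d_2\circ d_3(y_1\otimes y_2\otimes y_3)
&=&\b(y_1)\cdot[y_2,y_3]+\b(y_2)\cdot[y_3,y_1]+\b(y_3)\cdot[y_1,y_2]=0,
\end{eqnarray*}
by \eqref{hanti04}. This completes the proof.
\end{proof}
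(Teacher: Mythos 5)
Your proof is correct and follows the same route as the paper, which simply notes that $d_2\circ d_3=0$ is term-by-term equivalent to the defining identities \eqref{hanti01}--\eqref{hanti04} and declares the verification trivial. You have merely written out explicitly the four computations the paper leaves implicit, with the parities and signs handled correctly.
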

\begin{proof}
The condition $d_{2}\circ d_3=0$ is just equivalent to the conditions \eqref{hanti01}--\eqref{hanti04} in the Definition \ref{def1}. So the proof is trivial.
\end{proof}
%%%%%%%%%%%%%%%%%%%%%%%%%%%%%%%%%%%%%%%%%%%%%%%%%%%%%%%%%%
\emptycomment{
\begin{proof}
Since $(M, \alpha_M)$ is a representation of $(L,\alpha_L)$, we have
\begin{align*}
&d_{1}\circ d_2(m\otimes x_1\otimes x_2)\\
=&(m\centerdot x_1)\centerdot \alpha_L(x_2)-\alpha_L(x_2)\centerdot (m\centerdot x_1)+(x_2\centerdot m)\centerdot \alpha_L(x_1)\\
&-\alpha_L(x_1)\centerdot (x_2\centerdot m)-\alpha_M(m)\centerdot (x_1x_2)+(x_1x_2)\centerdot \alpha_M(m)\\
=&0
\end{align*}
and
\begin{align*}
&d_{2}\circ d_3(m\otimes x_1\otimes x_2\otimes x_3)\\
=&\\
\end{align*}
Hence, we prove this proposition.
\end{proof}
%%%%%%%%%%%%%%%%%%%%%%%%%%%%%%%%%%%%%%%%%%%%%%%%%%%%%%%%%%

\begin{align*}
d_1(x_1\otimes u)=~&x_1\cdot u\in V_0,\\
d_1(x_1\otimes v)=~&x_1\cdot v\in V_1,\\
d_1(y_1\otimes v)=~&[y_1, v]\in V_0.
\end{align*}
\begin{align*}
d_2(x_1\otimes x_2 \otimes u)=~&\alpha(x_1)\otimes x_2\cdot u -x_1\cdot x_2 \otimes\alpha_{V_0}(u) ,\\
d_2(x_1\otimes x_2\otimes v)=~& \alpha(x_1)\otimes x_2\cdot v -\half x_1\cdot x_2 \otimes\beta_{V_1}(v),\\
d_2(x_1\otimes y_1\otimes v)=~& \alpha(x_1)\otimes [y_1, v]-\beta(y_1)\otimes x_1 \cdot v  -x_1\cdot y_1\otimes  \beta_{V_1}(v),\\
d_2(y_1\otimes y_2\otimes v)=~&\beta(y_2)\otimes[y_1,v]- \beta(y_1)\otimes [y_2,v]-[y_1, y_2] \otimes\beta_{V_1}(v) .
\end{align*}
%%%%%%%%%%%%%%%%%%%%%%%%%%%%%%%%
\begin{eqnarray*}
&&d_2d_3(x_1 \otimes x_2\otimes x_3\otimes u)\\
&=& d_2(x_2\otimes  x_3 \otimes  \a(x_1)\cdot u+\a(x_1)\otimes x_2\cdot x_3\otimes u\\
&&-x_1\otimes x_2 \otimes\a(x_3)\cdot u-x_1\cdot x_2\otimes\a(x_3)\otimes u)\\
&=& d_2\{x_2\otimes ( x_3 \cdot  (\a(x_1)\cdot u))+\a(x_1)\otimes ((x_2\cdot x_3)\cdot u)\\
&&-x_1\otimes (x_2 \cdot(\a(x_3)\cdot u))-x_1\cdot x_2\otimes(\a(x_3)\cdot u)\}\\
&&- d_2\{x_2\cdot  x_3 \otimes  \a(x_1)\cdot u+(\a(x_1)\cdot (x_2\cdot x_3))\otimes u\\
&&-(x_1\cdot x_2) \otimes\a(x_3)\cdot u-((x_1\cdot x_2)\cdot\a(x_3))\otimes u\}\\
\end{eqnarray*}
%%%%%%%%%%%%%%%%%%%%%%%%%%%%%%%%%%%%%%%%%%%%%%%
\begin{eqnarray}\label{cocycle1}
\nonumber &&d_3(x_1 \otimes x_2\otimes x_3\otimes u)\\
\nonumber&=& x_1\otimes  x_2 \otimes  \a(x_3)\cdot u+\a(x_1)\otimes x_2\cdot x_3\otimes u\\
&&-x_1\otimes x_2 \otimes\a(x_3)\cdot u-x_1\cdot x_2\otimes\a(x_3)\otimes u.
\end{eqnarray}
\begin{eqnarray}\label{cocycle2}
\nonumber &&d_3(v\otimes x_1\otimes  x_2\otimes  y_1)\\
\nonumber&=&\a(x_1) \cdot v\otimes x_2\otimes y_1+v\otimes\a(x_1)\otimes x_2\cdot y_1\\
&&\half[\b(y_1),v]\otimes x_1\otimes x_2-\half v\otimes x_1\cdot x_2\otimes\b(y_1).
\end{eqnarray}
\begin{eqnarray}\label{cocycle3}
\nonumber&& d_3(v\otimes x_1\otimes y_1\otimes y_2)\\
\nonumber&=&\a(x_1)\cdot v \otimes y_1\otimes y_2+v \otimes\a(x_1)\otimes[y_1,y_2]\\
\nonumber&&-[\b(y_2), v]\otimes x_1\otimes y_1-v \otimes x_1\cdot y_1\otimes \b(y_2)\\
&&-[\b(y_1), v]\otimes x_1\otimes  y_1-v \otimes \b(y_1)\otimes x_1\cdot y_2.
\end{eqnarray}
\begin{eqnarray}\label{cocycle4}
\nonumber && d_3(v\otimes y_1\otimes y_2\otimes y_3)\\
\nonumber&=&[\b(y_1),v]\otimes y_2\otimes y_3+v\otimes \b(y_1)\otimes[y_2,y_3]\\
\nonumber&&+[\b(y_2),v]\otimes y_3\otimes y_1+v\otimes \b(y_2)\otimes[y_3,y_1]\\
&&+[\b(y_3),v]\otimes y_1\otimes y_2+ v\otimes\b(y_3)\otimes[y_1,y_2].
\end{eqnarray}
%%%%%%%%%%%%%%%%%%%%%%%%%%%%%%%%%%
%%%%%%%%%%%%%%%%%%%%%%%%%%%%%%%%%%%%%%%%%%%%%
\section{Module extensions}\label{}
\begin{thm}\label{thm001}
Let $(\fa,\a,\b)$ be a Hom-Lie antialgebra, $(V,\a_{V_0},\b_{V_1})$ and $(W,\a_{W_0},\b_{W_1})$ be representations of  $(\fa,\a,\b)$. Then we have
\begin{eqnarray*}
\Ext(V,W)\cong H^1(\fa, \Hom(V,W))
\end{eqnarray*}
\end{thm}
\begin{proof}
For a module extension $\Ext(V,W)$, we take a linear section $\sigma:V\to V\oplus W$ with $\pi\circ\sigma=\mathrm{id}_V$ and define
\begin{eqnarray*}
\delta_0(x_1)(u_1):=\rho_0(x_1)\sigma_0(u_1)-\sigma_0(\rho_0(x_1)(u_1))\\
\delta_1(y_1)(u_1):=\rho_1(y_1)\sigma_1(u_1)-\sigma_1(\rho_0(y_1)(u_1))\\
\end{eqnarray*}
For an element $\delta=(\delta_0,\delta_1)\in C^1(\fa, \Hom(V,W))$, we define on the space $V\oplus W$ the $\fa$-module structures by
\begin{eqnarray*}
\rho_0(x_1)(u_1,w_1):=(\rho_0(x_1)(u_1), \rho_0(x_1)(w_1)+\delta_0(x_1)(w_1))\\
\rho_1(y_1)(u_1,w_1):=(\rho_1(y_1)(u_1), \rho_1(y_1)(w_1)+\delta_1(y_1)(w_1))
\end{eqnarray*}
\end{proof}
}
\emptycomment{
%%%%%%%%%%%%%%%%%%%%%%%%%%%%%%%%%%%%%%%%%%%%%
\section{Derivation extensions}\label{ext}
%%%%%%%%%%%%%%%%%%%%%%%
\begin{defi}
Let $(\fa;\a,\b)$ be a Hom-Lie antialgebras. A Hom-Lie antialgebra homomorphism $D=(D_0,D_1)$ from $\fa$ to $\fa$  is called a derivation of $\fa$  if the following equalities hold for all $x_i\in\fa_0,~y_i\in\fa_1$:
\begin{eqnarray}
\label{Thomo1}
D_0(x_1\cdot x_2)&=&D_0(x_1)\cdot x_2+x_1\cdot D_0(x_2),\\
\label{Thomo2}
D_1(x_1\cdot y_1)&=&D_0(x_1)\cdot y_1+x_1\cdot D_1(y_1),\\
\label{Thomo3}
D_0([y_1,y_2])&=&[D_1(y_1),y_2]+[y_1,D_1(y_2)].
\end{eqnarray}
\end{defi}

\begin{thm}\label{Thm1}
Let $(\fa,\a,\b)$ be a Hom-Lie antialgebra, $(V,\r)$ be a representation of $(\fa,\a,\b)$. Then $(\fa\oplus V,\a+\a_{V_0},\b+\b_{V_1})$ is a Hom-Lie antialgebra under the following maps:
\begin{eqnarray}
\label{homo1}(\a+\a_{V_0})(x_1,u_1)&=&(\a(x_1),\a_{V_0}(u_1)),\notag\\
\label{homo2}(\b+\b_{V_1})(y_1,w_1)&=&(\b(y_1),\b_{V_1}(w_1)),\notag\\
\label{op1}
(D_0,u_1)\cdot(D_0,u_2)&=&(0,~ D_0(u_2)+D_0(u_1)
+u_1\cdot u_2),\notag\\
\label{op2}
(D_0,u_1)\cdot(D_1,w_1)&=&(0,~ D_0(w_1)+D_1(u_1)
+u_1\cdot w_1),\notag\\
\label{op3}
[(D_1,w_1),(D_1,w_2)]&=&(0,~ D_1(w_2)-D_1(w_1)
+[w_1,w_2]),\notag
\end{eqnarray}
for all $x_1,x_2\in\fa_0,~y_1,y_2\in\fa_1, u_1,u_2\in V_0, w_1,w_2\in V_1$.
This is called a semidirect product of $\fa$ and $V$,
denoted by $\fa\ltimes V$.
\end{thm}

\begin{proof}
By definition, we have
\begin{eqnarray*}
% \nonumber to remove numbering (before each equation)
&&(\a+\a_{V_0})((D_0,u_1))\cdot((D_0,u_2)\cdot(D_0,u_3))\\
&=&(D_0,\a_{V_0}(u_1))\cdot((D_0,u_2)\cdot(D_0,u_3))\\
&=&(0,
~ D_0 ( D_0 (u_3))
+ D_0 ( D_0 (u_2))\\
&&+\r_0(D_0\cdot D_0)(\a_{V_0}(u_1))
+\underbrace{ D_0 (u_2\cdot u_3)}_A\\
&&+\underbrace{\a_{V_0}(u_1)\cdot D_0 (u_3)}_B+\underbrace{\a_{V_0}(u_1)\cdot D_0 (u_2)}_C+\a_{V_0}(u_1)\cdot(u_2\cdot u_3)
\end{eqnarray*}
and
\begin{eqnarray*}
% \nonumber to remove numbering (before each equation)
&&((D_0,u_1)\cdot(D_0,u_2))\cdot(\a+\a_{V_0})((D_0,u_3))\\
&=&((D_0,u_1)\cdot(D_0,u_2))\cdot(D_0,\a_{V_0}(u_3))\\
&=&(0,
~D_0\cdot D_0)(\a_{V_0}(u_3))
+D_0 D_0 (u_2)\\
&&+D_0D_0 (u_1)
+\underbrace{D_0(u_1\cdot u_2)}_{C'}.\\
&&+\underbrace{ D_0 (u_2)\cdot\a_{V_0}(u_3)}_{A'}+\underbrace{ D_0 (u_1)\cdot\a_{V_0}(u_3)}_{B'}+(u_1\cdot u_2)\cdot\a_{V_0}(u_3).
\end{eqnarray*}
Due to \eqref{action01} and commutativity of product in $V_0$, we have
$$A=A', ~~B=B', ~~C=C'.$$
Thus we obtain
\begin{eqnarray}\label{semidirect01}\notag
&&(\a+\a_{V_0})((D_0,u_1))\cdot((D_0,u_2)\cdot(D_0,u_3))\\
&=&((D_0,u_1)\cdot(D_0,u_2))\cdot(\a+\a_{V_0})((D_0,u_3)).
\end{eqnarray}

\end{proof}
}

%%%%%%%%%%%%%%%%%%%%%%%%%%%%%%%%%%%%%%%%%%%%%
%\section{Central extensions}\label{ext}
%%%%%%%%%%%%%%%%%%%%%%%%%%%%%%%%%%%%%%%%%%%%%

In the above, we have define the (co)homology for a Hom-Lie antialgebra $(\fa,\a,\b)$ with trivial coefficients.
In fact, we can also define (co)homology for a Hom-Lie antialgebra $(\fa,\a,\b)$ with nontrivial coefficients, see \cite{ZZ1} for details.

Let $(\fa,\a,\b)$ be a Hom-Lie antialgebra, $(V,\a_{V_0},\b_{V_1})$ be a Hom-vector superspace.
A triple of linear maps $\omega=(\omega_0,\omega_1,\omega_2)$ where
\begin{eqnarray*}
\omega_0:\fa_0\times \fa_0\to V_0,\quad \omega_1:\fa_0\times \fa_1\to V_1,\quad \omega_2:\fa_1\times \fa_1\to V_0
\end{eqnarray*}
is called  a 2-cocycle with coefficients in $V$ if they satisfy the following conditions:
%%%%%%%%%%%%%%%%%%%%%%%%%%%%%%%
\begin{eqnarray}\label{cocycle1}
\omega_0(\a(x_1),x_2\cdot x_3)-\omega_0(x_1\cdot x_2,\a(x_3))=0,\\
\label{cocycle2}
\omega_1(\a(x_1),x_2\cdot y_1)-\half\omega_1(x_1\cdot x_2,\b(y_1))=0,\\
\label{cocycle3}
\omega_0(\a(x_1),[y_1,y_2])-\omega_2(x_1\cdot y_1,\b(y_2))-\omega_2(\b(y_1),x_1\cdot y_2)=0,\\
\label{cocycle4}
\omega_1(\b(y_1),[y_2,y_3])+\omega_1(\b(y_2),[y_3,y_1])+\omega_1(\b(y_3),[y_1,y_2])=0.
\end{eqnarray}

Now we study the central extensions of Hom-Lie antialgebra.
We construct a central extension by using a 2-cocycle of Hom-Lie antialgebra $(\fa,\a,\b)$ with coefficients in $V$.

\begin{defi}
Let $(V,\a_{V_0},\b_{V_1})$, $(\fa,\a,\b)$ and $(\widetilde{\fa},\widetilde{\a},\widetilde{\b})$ be Hom-Lie antialgebras. An extension of $\fa$ by $V$ is a short exact sequence
\begin{equation*}
\xymatrix@C=0.5cm{
  0 \ar[r]^{} & V  \ar[rr]^{i } && \widetilde{\fa } \ar[rr]^{\pi} && \fa  \ar[r]^{} & 0 }
\end{equation*}
of Hom-Lie antialgebra. It is  called a central extension, if $V$ is contained in the center $Z(\widetilde{\fa})$ of $\widetilde{\fa}$.
\end{defi}

Let $(\fa,\a,\b)$ be a Hom-Lie antialgebra, $(V,\a_{V_0},\b_{V_1})$ be a Hom-vector superspace.
We define on the direct sum space $(\fa\oplus V,\a+\a_{V_0},\b+\b_{V_1})$ the following operations:
\begin{eqnarray}
(\alpha+\alpha_{V_0})(x_1,u_1)&:=&(\alpha(x_1),\alpha_{V_0}(u_1)),\notag\\
(\beta+\beta_{V_1})(y_1,v_1)&:=&(\beta(y_1),\alpha_{V_1}(v_1)),\notag\\
\label{op1}
(x_1,u_1)\cdot(x_2,u_2)&:=&(x_1\cdot x_2,\omega_0(x_1,x_2)),\notag\\
\label{op2}
(x_1,u_1)\cdot(y_1,v_1)&:=&(x_1\cdot y_1,\omega_1(x_1,y_1)),\notag\\
\label{op3}
[(y_1,v_1),(y_2,v_2)]&:=&([y_1,y_2], \omega_2(y_1,y_2)),\notag
\end{eqnarray}
for all $x_1,x_2\in\fa_0,~y_1,y_2\in\fa_1$, $u_1,u_2\in V_0,~w_1,w_2\in V_1$.
The space with the above operations is denoted by $\fa\oplus_\omega V$.
%Now, we can obtain a Hom-Lie antialgbra structure on the space $\fa\oplus V$ using the $2$-cocycle given above.
\begin{lemma}\label{lem01}
With the above notations, $(\fa\oplus_\omega V,\a+\a_{V_0},\b+\b_{V_1})$ is a Hom-Lie antialgebra if and only if $\omega=(\omega_0,\omega_1,\omega_2)$ is a $2$-cocycle of
Hom-Lie antialgebra $(\fa,\a,\b)$ with coefficients in $V$.
In this case, the following exact sequence
\begin{equation*}
\xymatrix@C=0.5cm{
  0 \ar[r]^{} & V  \ar[rr]^{i } && \fa\oplus_\omega V \ar[rr]^{\pi} && \fa  \ar[r]^{} & 0 }
\end{equation*}
is a central extension of $\fa$ by $V$, where $i$ is the canonical including map and $\pi$ is the projection map.
\end{lemma}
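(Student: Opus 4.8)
The plan is to verify the four Hom-Lie antialgebra axioms \eqref{hanti01}--\eqref{hanti04} on $\fa\oplus_\omega V$ and read off, one axiom at a time, that each is equivalent to the corresponding cocycle condition. The organizing observation is that in every structural product on $\fa\oplus_\omega V$ the $\fa$-component is computed exactly as in $\fa$, while the $V$-component is produced by one of $\omega_0,\omega_1,\omega_2$ applied only to the $\fa$-components of the arguments, never to their $V$-components. Hence, when both sides of any of \eqref{hanti01}--\eqref{hanti04} are expanded for elements of $\fa\oplus_\omega V$, their $\fa$-components automatically coincide (since $(\fa,\a,\b)$ is already a Hom-Lie antialgebra), and the identity holds in $\fa\oplus_\omega V$ if and only if the $V$-components agree. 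Before this, I would record that the even/odd parts are $\fa_0\oplus V_0$ and $\fa_1\oplus V_1$, that the three products respect these parities, and that supercommutativity of $\fa\oplus_\omega V$ reduces to $\omega_0$ being symmetric and $\omega_2$ antisymmetric (these slots represent a symmetric even product and an antisymmetric odd bracket), the mixed product being commutative automatically.

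The core step is the term-by-term matching. Evaluating \eqref{hanti01} on $X_i=(x_i,u_i)$ gives
\[
(\a+\a_{V_0})(X_1)\cdot(X_2\cdot X_3)=\bigl(\a(x_1)\cdot(x_2\cdot x_3),\,\omega_0(\a(x_1),x_2\cdot x_3)\bigr)
\]
and
\[
(X_1\cdot X_2)\cdot(\a+\a_{V_0})(X_3)=\bigl((x_1\cdot x_2)\cdot\a(x_3),\,\omega_0(x_1\cdot x_2,\a(x_3))\bigr),
\]
so, the $\fa$-components agreeing by \eqref{hanti01} in $\fa$, equality in $\fa\oplus_\omega V$ holds iff $\omega_0(\a(x_1),x_2\cdot x_3)=\omega_0(x_1\cdot x_2,\a(x_3))$, which is exactly \eqref{cocycle1}. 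The identical bookkeeping turns \eqref{hanti02} into \eqref{cocycle2} (the factor $\half$ accompanying the $\b_{V_1}$ term), \eqref{hanti03} into \eqref{cocycle3} (the two bracket terms on the right yielding $\omega_2(x_1\cdot y_1,\b(y_2))+\omega_2(\b(y_1),x_1\cdot y_2)$), and \eqref{hanti04} into \eqref{cocycle4}. Since every Hom-Lie antialgebra axiom is thereby equivalent to a single cocycle identity, this establishes the asserted equivalence in both directions at once.

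For the central extension, equip $V$ with the trivial Hom-Lie antialgebra structure (all products zero) and the given $\a_{V_0},\b_{V_1}$; the axioms on $V$ then read $0=0$. Take $i(v)=(0,v)$ and $\pi(x,u)=x$. One checks directly that $i$ and $\pi$ commute with $\a+\a_{V_0}$ and $\b+\b_{V_1}$ and preserve the three products: for $\pi$ by projecting off the $V$-component, and for $i$ using that $\omega_0(0,\cdot)=\omega_1(0,\cdot)=\omega_2(0,\cdot)=0$ by linearity, so that $i$ sends the trivial products of $V$ to $0$. Thus both are Hom-Lie antialgebra homomorphisms, $i$ is injective, $\pi$ is surjective, and $\Ker\pi=\{(0,v)\}=\mathrm{im}\,i$, giving the short exact sequence. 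Centrality follows from the same principle used throughout: any product having a factor $(0,v)$ has vanishing $\fa$-component and, by linearity of the $\omega_i$, vanishing $V$-component, whence $V\subseteq Z(\fa\oplus_\omega V)$.

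I expect the only genuine difficulty to be careful bookkeeping rather than any conceptual point: one must keep straight which of $\omega_0,\omega_1,\omega_2$ is generated by each of the three products, and track the factor $\half$ together with the symmetry/antisymmetry conventions. Once the principle ``the $\fa$-parts cancel and the $V$-parts reproduce the cocycle identity'' is in place, each of the four cases is a one-line verification, and the central-extension claims reduce to elementary checks.
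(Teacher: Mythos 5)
Your proposal is correct and follows essentially the same route as the paper: expand each of the axioms \eqref{hanti01}--\eqref{hanti04} on $\fa\oplus_\omega V$, observe that the $\fa$-components agree automatically, and match the $V$-components with \eqref{cocycle1}--\eqref{cocycle4}. You are in fact slightly more complete than the paper, which omits the verification that $i,\pi$ are homomorphisms, the exactness, and the centrality of $V$, and does not flag that supercommutativity requires $\omega_0$ symmetric and $\omega_2$ antisymmetric.
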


\begin{proof} Assume $\omega=(\omega_0,\omega_1,\omega_2)$ is a 2-cochain of Hom-Lie antialgebra $(\fa,\a,\b)$ with coefficients in $V$.
We will verify that $\fa\oplus_\omega V$ with the above operations satisfy identities \eqref{hanti01}--\eqref{hanti04} if and only if $\omega$ satisfy 2-cocycle conditions \eqref{cocycle1}--\eqref{cocycle4}.
Firstly, we compute
\begin{eqnarray*}
% \nonumber to remove numbering (before each equation)
&&(\a+\a_{V_0})((x_1,u_1))\cdot((x_2,u_2)\cdot(x_3,u_3))\\
&=&(\a(x_1),\a_{V_0}(u_1))\cdot((x_2,u_2)\cdot(x_3,u_3))\\
&=&\big(\a(x_1)\cdot(x_2\cdot x_3),\omega_0(\a(x_1),x_2\cdot x_3)\big),
\end{eqnarray*}
\begin{eqnarray*}
% \nonumber to remove numbering (before each equation)
&&((x_1,u_1)\cdot(x_2,u_2))\cdot(\a+\a_{V_0})((x_3,u_3))\\
&=&((x_1,u_1)\cdot(x_2,u_2))\cdot(\a(x_3),\a_{V_0}(u_3))\\
&=&\big((x_1\cdot x_2)\cdot \a(x_3),\omega_0(x_1\cdot x_2,\a(x_3)\big).
\end{eqnarray*}
Thus \eqref{hanti01} hold if and only if \eqref{cocycle1} hold.

Secondly, we compute
\begin{eqnarray*}
% \nonumber to remove numbering (before each equation)
&&(\a+\a_{V_0})((x_1,u_1))\cdot((x_2,u_2)\cdot(y_1,v_1))\\
&=&(\a(x_1),\a_{V_0}(u_1))\cdot((x_2,u_2)\cdot(y_1,v_1))\\
&=&\big(\a(x_1)\cdot(x_2,y_1),\omega_1(\a(x_1),x_2\cdot y_1)\big),
\end{eqnarray*}
\begin{eqnarray*}
% \nonumber to remove numbering (before each equation)
&&((x_1,u_1)\cdot(x_2,u_2))\cdot(\b+\b_{V_1})(y_1,v_1)\\
&=&((x_1,u_1)\cdot(x_2,u_2))\cdot(\b(y_1),\b_{V_1}(v_1))\\
&=&\big((x_1,x_2)\cdot\b(y_1),\omega_1(x_1\cdot x_2,\b(y_1))\big).
\end{eqnarray*}
Thus \eqref{hanti02} hold if and only if \eqref{cocycle2} hold.

Thirdly, we have
\begin{eqnarray*}
% \nonumber to remove numbering (before each equation)
&&(\a+\a_{V_0})(x_1,u_1)\cdot[(y_1,v_1)\cdot(y_2,v_2)]\\
&=&(\a(x_1),\a_{V_0}(u_1))\cdot[(y_1,v_1)\cdot(y_2,v_2)]\\
&=&\big(\a(x_1)\cdot[y_1,y_2],\omega_0(\a(x_1),[y_1,y_2])\big),
\end{eqnarray*}
\begin{eqnarray*}
% \nonumber to remove numbering (before each equation)
&&[(x_1,u_1)\cdot(y_1,v_1),(\b+\b_{V_1})(y_2,v_2)]\\
&=&[(x_1,u_1)\cdot(y_1,v_1)),(\b(y_2),\b_{V_1}(v_2))]\\
&=&\big((x_1\cdot y_1)\cdot\b(y_2),\omega_2(x_1\cdot y_1,\b_{V_1}(v_2))\big),
\end{eqnarray*}
\begin{eqnarray*}
% \nonumber to remove numbering (before each equation)
&&[(\b+\b_{V_1})(y_1,v_1),(x_1,u_1)\cdot(y_2,v_2)]\\
&=&[(\b(y_1),\b_{V_1}(v_1)),(x_1,u_1)\cdot(y_2,v_2)]\\
&=&\big(\b(y_1)\cdot(x_1\cdot y_2),\omega_2(x_1\cdot y_2,\b(y_1))\big).
\end{eqnarray*}
Thus \eqref{hanti03} hold if and only if \eqref{cocycle3} hold.

Finally, we have
\begin{eqnarray*}
% \nonumber to remove numbering (before each equation)
&&(\b+\b_{V_1})(y_1,v_1)\cdot[(y_2,v_2),(y_3,v_3)]\\
&=&(\b(y_1),\b_{V_1}(v_1))\cdot[(y_2,v_2),(y_3,v_3)]\\
&=&\big(\b(y_1)\cdot[y_2,y_3],\omega_1(\b(y_1),[y_2,y_3])\big),
\end{eqnarray*}
\begin{eqnarray*}
% \nonumber to remove numbering (before each equation)
&&(\b+\b_{V_1})(y_2,v_2)\cdot[(y_3,v_3),(y_1,v_1)]\\
&=&(\b(y_2),\b_{V_1}(v_2))\cdot[(y_3,v_3),(y_1,v_1)\\
&=&\big(\b(y_2)\cdot[y_3,y_1],\omega_1(\b(y_2),[y_3,y_1])\big),
\end{eqnarray*}
\begin{eqnarray*}
&&(\b+\b_{V_1})(y_3,v_3)\cdot[(y_1,v_1),(y_2,v_2)]\\
&=&\big(\b(y_3),\b_{V_1}(v_3))\cdot[(y_1,v_1),(y_2,v_2)]\\
&=&\big(\b(y_3)\cdot[y_1,y_2],\omega_1(\b(y_3),[y_1,y_2])\big).
\end{eqnarray*}
Thus \eqref{hanti04} hold if and only if \eqref{cocycle4} hold.

Therefore we obtain that $\fa\oplus_\omega V$ is a Hom-Lie antialgebra if and only if $\omega=(\omega_0,\omega_1,\omega_2)$ is a 2-cocycle with coefficients in $V$.
We complete the proof.
\end{proof}

\begin{prop}\label{prop:crossedmod}
Let $(V,\a_{V_0},\b_{V_1})$, $(\fa,\a,\b)$ and $(\widetilde{\fa},\widetilde{\a},\widetilde{\b})$ be Hom-Lie antialgebras. Every central  extension of $\fa$ by $V$
\begin{equation*}
\xymatrix@C=0.5cm{
  0 \ar[r]^{} & V  \ar[rr]^{i } && \widetilde{\fa } \ar[rr]^{\pi} && \fa  \ar[r]^{} & 0 }
\end{equation*}
of Hom-Lie antialgebras give rise to a crossed module $(\widetilde{\fa}, \fa,\pi)$ where $\pi$ is the projective map.
\end{prop}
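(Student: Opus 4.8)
The plan is to turn the surjection $\pi$ into a crossed module by equipping $\widetilde{\fa}$ with an $\fa$-action induced from the multiplication in $\widetilde{\fa}$ through a section. First I would choose a grade-preserving linear section $s=(s_0,s_1):\fa\to\widetilde{\fa}$ of $\pi$, that is, $\pi_0\circ s_0=\mathrm{id}_{\fa_0}$ and $\pi_1\circ s_1=\mathrm{id}_{\fa_1}$; such a section exists because $\pi$ is a surjective morphism of $\bbZ_2$-graded vector spaces. Using $s$ I define the action $\rho$ of $\fa$ on $\widetilde{\fa}$ by
\begin{eqnarray*}
\rho_0(x_1)(\tilde u)&=&s_0(x_1)\cdot\tilde u,\qquad \rho_0(x_1)(\tilde v)=s_0(x_1)\cdot\tilde v,\\
\rho_1(y_1)(\tilde u)&=&s_1(y_1)\cdot\tilde u,\qquad \rho_1(y_1)(\tilde v)=[s_1(y_1),\tilde v],
\end{eqnarray*}
for $x_1\in\fa_0$, $y_1\in\fa_1$, $\tilde u\in\widetilde{\fa}_0$, $\tilde v\in\widetilde{\fa}_1$, the operations on the right being those of $\widetilde{\fa}$. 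The crucial observation is that $\rho$ is independent of the choice of $s$: if $s'$ is another section then $s(a)-s'(a)\in\Ker\pi=i(V)$ for every $a$, and since $V\subseteq Z(\widetilde{\fa})$, multiplying this difference by any element of $\widetilde{\fa}$ yields zero.

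Second I would check that $(\widetilde{\fa},\rho)$ is genuinely an action, i.e. that $\rho$ is a representation and satisfies \eqref{action01}--\eqref{action04}. Each such identity is the image under the products of $\widetilde{\fa}$ of one of the defining identities \eqref{hanti01}--\eqref{hanti04} applied to section elements. The one subtlety is that $s$ need not commute with the structure maps, so in general $s_0\circ\alpha\neq\widetilde{\alpha}\circ s_0$ and $s_1\circ\beta\neq\widetilde{\beta}\circ s_1$. However, since $\pi$ intertwines the structure maps by \eqref{homo01}--\eqref{homo02}, the discrepancies $s_0(\alpha(x_1))-\widetilde{\alpha}(s_0(x_1))$ and $s_1(\beta(y_1))-\widetilde{\beta}(s_1(y_1))$ lie in $\Ker\pi=i(V)$ and hence are central; wherever they occur they are multiplied by an element of $\widetilde{\fa}$ and therefore drop out. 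For example, \eqref{action01} reduces to $\widetilde{\alpha}(s_0(x_1))\cdot(\tilde u_1\cdot\tilde u_2)=(s_0(x_1)\cdot\tilde u_1)\cdot\widetilde{\alpha}(\tilde u_2)$, which is precisely \eqref{hanti01} in $\widetilde{\fa}$, the replacement of $s_0(\alpha(x_1))$ by $\widetilde{\alpha}(s_0(x_1))$ costing only a central term annihilated by $\tilde u_1\cdot\tilde u_2$. The remaining axioms are handled identically.

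Third I would verify the crossed module identities \eqref{cm1}--\eqref{pei3} of Definition \ref{def:crossedmod} with $\partial=\pi$. The identities \eqref{cm1}--\eqref{cm4} are immediate from $\pi$ being a homomorphism together with $\pi\circ s=\mathrm{id}$: for instance $\pi_0(\rho_0(x_1)(\tilde u_1))=\pi_0(s_0(x_1)\cdot\tilde u_1)=\pi_0(s_0(x_1))\cdot\pi_0(\tilde u_1)=x_1\cdot\pi_0(\tilde u_1)$, which is \eqref{cm1}, and likewise for \eqref{cm2}--\eqref{cm4} after using supercommutativity in the odd cases. The Peiffer-type identities \eqref{pei1}--\eqref{pei3} again exploit centrality: for \eqref{pei1}, $\rho_0(\pi_0(\tilde u_1))(\tilde u_2)=s_0(\pi_0(\tilde u_1))\cdot\tilde u_2$, and since $s_0(\pi_0(\tilde u_1))-\tilde u_1\in\Ker\pi=i(V)$ is central we obtain $s_0(\pi_0(\tilde u_1))\cdot\tilde u_2=\tilde u_1\cdot\tilde u_2$; the identities \eqref{pei2}--\eqref{pei3} follow in the same way.

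The main obstacle is the bookkeeping of the second step: because $s$ is only linear and not a morphism of Hom-structures, one must track the correction terms produced by $\widetilde{\alpha},\widetilde{\beta}$ and confirm that each lands in the central subspace $V$ before it is multiplied out. Once this single centrality mechanism is isolated, every one of the seven action axioms and every crossed module identity follows from the defining identities of $\widetilde{\fa}$ and the homomorphism property of $\pi$, so I would organize the argument around this observation rather than repeating the same cancellation case by case.
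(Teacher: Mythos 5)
Your proposal is correct and follows essentially the same route as the paper: the paper identifies $\widetilde{\fa}$ with $\fa\oplus_\omega V$ via Lemma \ref{lem01} and defines the action by $\rho_0(x_1)(x_2,u_2)=(x_1\cdot x_2,\omega_0(x_1,x_2))$, which is exactly your $s_0(x_1)\cdot\tilde u$ written in the coordinates of the splitting, and both arguments hinge on the same centrality of $i(V)=\Ker\pi$ to get the Peiffer-type identities \eqref{pei1}--\eqref{pei3}. Your explicit checks that the action is independent of the section and that the discrepancies $s_0(\alpha(x_1))-\widetilde{\alpha}(s_0(x_1))$ are central are details the paper leaves implicit, but they do not change the argument.
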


\begin{proof}
First, we prove that there is an action of $\fa$ on $\widetilde{\fa}$ in the natural way. By the above Lemma \ref{lem01},  we write   $\widetilde{\fa}=\fa\oplus V$ as direct sum space.
%with operations
%\begin{eqnarray*}
%(x_1,u_1)\cdot(x_2,u_2)&:=&(x_1\cdot x_2,\omega_0(x_1,x_2)),\\
%(x_1,u_1)\cdot(y_1,v_1)&:=&(x_1\cdot y_1,\omega_1(x_1,y_1)),\\
%{[(y_1,v_1),(y_2,v_2)]}&:=&([y_1,y_2], \omega_2(y_1,y_2)).
%\end{eqnarray*}
Define an action $\fa$ on $\widetilde{\fa}$  by
\begin{eqnarray*}
\rho_0(x_1)(x_2,u_2)&:=&(x_1\cdot  x_2,\omega_0(x_1, x_2)),\\
\rho_0(x_1)(y_1,v_1)&:=&(x_1\cdot y_1,\omega_1(x_1,y_1)),\\
\rho_1(y_1)(y_2,v_2)&:=&([y_1,y_2],\omega_2(y_1,y_2)).
\end{eqnarray*}
Since $\omega$ is a 2-cocycle, it is easy to verify that this is indeed an action of $\fa$ on $\widetilde{\fa}$.
%\begin{eqnarray*}
%\rho_0(\alpha(x_1))(u_1\cdot u_2)&=&\rho_0(x_1)(u_1)\cdot\alpha(u_2),\\
%\rho_0(\alpha(x_1))(u_1\cdot v_1)&=&\half\rho_0(x_1)(u_1)\cdot\beta_{V_1}(v_1),\\
%\rho_1(y_1)(u_2)\cdot\alpha_{V_0}(u_1)&=&\half\rho_1(\beta(y_1))(u_1\cdot u_2),\\
%\rho_0(x_1)(v_1)\cdot\alpha_{V_0}(u_1)&=&\half\rho_0(x_1)(u_1)\cdot\beta_{V_1}(v_1),\\
%\rho_0(\alpha(x_1))([v_1,v_2])&=&[\rho_0(x_1)(v_1),\beta_{V_1}(v_2)]+[\beta_{V_1}(v_1),\rho_0(x_1)(v_2)],\\
%\rho_1(\beta(y_1))(u_1\cdot v_1)&=&\alpha_{V_0}(u_1)\cdot\rho_1(y_1)(v_1)-[\rho_1(y_1)(u_1),\beta_{V_1}(v_1)],\\
%\rho_1(\beta(y_1))([v_1,v_2])&=&\beta_{V_1}(v_1)\cdot\rho_1(y_1)( v_2)-\beta_{V_1}(v_2)\cdot \rho_1(y_1)(v_1),
%\end{eqnarray*}
Next, we are going to verify that this action satisfies the conditions in Definition \ref{def:crossedmod}.
For $(x_1,u_1)\in \fa_0\oplus V_0$, $(y_1,v_1)\in \fa_1\oplus V_1$, we have $\partial_0(x_1,u_1)=\pi_0(x_1,u_1)=x_1$, $\partial_1(y_1,v_1)=\pi_1(y_1,v_1)=y_1$.
Thus by direct computations we get
\begin{eqnarray*}
\partial_0\circ\rho_0(x_1)(x_2,u_2)&=&x_1\cdot\partial_0(x_2,u_2)=x_1\cdot  x_2,\\
\partial_1\circ\rho_0(x_1)(y_1,v_1)&=&x_1\cdot\partial_1((y_1,v_1))=x_1\cdot y_1,\\
\partial_1\circ\rho_1(y_1)(x_2,u_2)&=&y_1\cdot\partial_0(x_2,u_2)=y_1\cdot x_2,\\
\partial_0\circ\rho_1(y_1)(y_2,v_2)&=&[y_1,\partial_1(y_2,v_2)]=[y_1,y_2],\\
\rho_0(\partial_0(x_1,u_1))(x_2,u_2)&=&(x_1,u_1)\cdot (x_2,u_2)=(x_1\cdot  x_2,\omega_0(x_1, x_2)),\\
\rho_1(\partial_1(y_1,v_1))(y_2,v_2)&=&[(y_1,v_1),(y_2,v_2)]=([y_1,y_2],\omega_2(y_1,y_2)),\\
\rho_0(\partial_0(x_1,u_1))(y_1,v_1)&=&(x_1,u_1)\cdot(y_1,v_1)=(x_1\cdot y_1,\omega_1(x_1,y_1)).
\end{eqnarray*}
Therefore we obtain a crossed module $(\widetilde{\fa}, \fa,\pi)$.
\end{proof}

\begin{exe}\label{exe02}
Consider the 3-dimensional Hom-Lie antialgebra $\fa=\{\e; a_1, a_2\}$ under the following map:
$$\a(\e)=\e, \quad \b(a_1)=\mu a_1,\quad \b(a_2)=\mu^{-1} a_2,\quad {[a_1, a_2]}=\e.$$
It admit a  1-dimensional central  extension by $V=\{0; z\}, \b_{V_1}({z})=\mu {z}$ with the nontrivial 2-cocycle $\omega_1(\e, a_1)=\mu {z}$.
Thus we obtain a 4-dimensional Hom-Lie antialgebra
$\widetilde{\fa}=\{\e;a_1, a_2, {z}\}$, where $\e$ is even and $a_1, a_2, {z}$ are odd,
and the linear maps $(\a,\b):\widetilde{\fa}\to \widetilde{\fa}$ are defined by
$$\a(\e)=\e, \quad \b(a_1)=\mu a_1,\quad \b(a_2)=\mu^{-1} a_2,\quad \b({z})=\mu {z},$$
on the basis elements,  , together with structural operations
\begin{eqnarray*}
{[a_1,a_2]}=\e,\quad \e\cdot{}a_1=\mu {z}.
\end{eqnarray*}
By Proposition \ref{prop:crossedmod} we obtain a crossed module $(\widetilde{\fa}, \fa,\pi)$ where $\pi:\widetilde{\fa}\to \fa$ is given by
$$ \e\mapsto\e,\quad a_1\mapsto a_1,\quad  a_2\mapsto a_2,\quad  z\mapsto 0.$$

%This algebra is a central extension of  its 3-dimensional subalgebra $\fa=\{\e; a_1, a_2\}$, ${[a_1, a_2]}=\e$ with $\omega_1(\e, a_1)=\mu {z}.$. There is an action of $N$ on $M$ in the natural way:
%$$\rho_0(\e)(a_1)= \e\cdot{}a_1=\mu {z},\quad\rho_1(a_1)(a_2)=[a_1,a_2]=\e,$$

%and we obtain the semidirect product  $N\ltimes M$ with $N$ as its subalgebra by Theorem \ref{Thm1} in the above Section.
%Now we define $s:N\ltimes M\to N$ and $t:N\ltimes M\to N$ by
%\begin{eqnarray*}
%&&s_0:N_0\ltimes M_0\to N_0, \quad (\e,\e')\mapsto \e\\
%&&s_1:N_1\ltimes M_1\to N_1, \quad(a_i,a_j)\mapsto  a_i
%\end{eqnarray*}
%and
%\begin{eqnarray*}
%&&t_0:N_0\ltimes M_0\to N_0, \quad(\e,\e')\mapsto \e+\e'\\
%&&t_1:N_1\ltimes M_1\to N_1,\quad (a_i,a_j)\mapsto  a_i+\pi(a_j)
%\end{eqnarray*}
%where $a_i\in N_1, i=1,2$ and $a_i\in M_1, i=1,2,3.$
\end{exe}

\section{Universal central extensions}

In this section we will deal with universal central extensions
of  Hom-Lie antialgebras. We generalize classical results of universal central extensions theory
of Lie algebras. We also prove the existence result of the universal central extensions under the perfect conditions.

\begin{defi}
A central extension  \begin{equation*}
\xymatrix@C=0.5cm{
  0 \ar[r]^{} & V  \ar[rr]^{i } && \widetilde{\fa } \ar[rr]^{\pi} && \fa  \ar[r]^{} & 0 }\leqno (CE)
\end{equation*}
is said to be universal if for every   central extension \begin{equation*}
\xymatrix@C=0.5cm{
  0 \ar[r]^{} & V  \ar[rr]^{\widehat{i}} && \widehat{\fa } \ar[rr]^{\widehat{\pi}} && \fa  \ar[r]^{} & 0 }\leqno (CE')
\end{equation*}
there exists a  unique homomorphism of Hom-Lie antialgebras
$\varphi: \widetilde{\fa }\to \widehat{\fa }$ such that $\widehat{\pi}\circ\varphi = \pi$.
\end{defi}

\begin{defi}
A Hom-Lie antialgebra $(\fa,\a,\b)$ is said to be perfect if  $\fa_0 = \fa_0\cdot \fa_0=[\fa_1, \fa_1]$ and $\fa_1 = \fa_0\cdot \fa_1$.
\end{defi}

If we define the ideal of $\fa$ generated by the elements of the form $[[\fa,\fa]]=\langle x_1\cdot x_2, x_1\cdot y_1,  y_1\cdot y_2 |\forall x_1, x_2\in\fa_0, \forall y_1, y_2\in\fa_1\rangle$, then $\fa$ is perfect if and only if $\fa=[[\fa,\fa]]$.

%\begin{defi}
%A central extension  $\xymatrix@C=0.5cm{0 \ar[r]^{} & V  \ar[r]^{i } &\widetilde{\fa} \ar[r]^{\pi} & \fa  \ar[r]^{} & 0 }$ is called a covering of $\fa$ if $\widetilde{\fa}$ is perfect.
%\end{defi}

\begin{lemma} \label{lemma1}
Let $\varphi:\widetilde{\fa}\rightarrow\fa$  be a surjective homomorphism  of  Hom-Lie antialgebras. If $(\widetilde{\fa},\widetilde{\a},\widetilde{\b})$ is a perfect  Hom-Lie antialgebra, then $(\fa,\a,\b)$ is also a perfect  Hom-Lie antialgebra.
\end{lemma}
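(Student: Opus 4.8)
The plan is to exploit the surjectivity of $\varphi=(\varphi_0,\varphi_1)$ together with the fact that a homomorphism of Hom-Lie antialgebras carries each structural product to the corresponding product in the target. Since $\varphi$ is surjective and grade-preserving, both components $\varphi_0\colon\widetilde{\fa}_0\to\fa_0$ and $\varphi_1\colon\widetilde{\fa}_1\to\fa_1$ are surjective. Perfectness of $\fa$ amounts to four subspace equalities, $\fa_0=\fa_0\cdot\fa_0$, $\fa_0=[\fa_1,\fa_1]$, and $\fa_1=\fa_0\cdot\fa_1$, and I would establish each by the same transport argument.

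First I would verify $\fa_0=\fa_0\cdot\fa_0$. The inclusion $\fa_0\cdot\fa_0\subseteq\fa_0$ is immediate from the graded algebra structure, so only the reverse inclusion needs work. Given $x\in\fa_0$, choose $\widetilde{x}\in\widetilde{\fa}_0$ with $\varphi_0(\widetilde{x})=x$, which is possible by surjectivity. Since $(\widetilde{\fa},\widetilde{\a},\widetilde{\b})$ is perfect, $\widetilde{x}$ lies in $\widetilde{\fa}_0\cdot\widetilde{\fa}_0$, hence is a finite sum $\widetilde{x}=\sum_i \widetilde{x}_i\cdot\widetilde{x}_i'$ with $\widetilde{x}_i,\widetilde{x}_i'\in\widetilde{\fa}_0$. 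Applying $\varphi_0$, linearity together with the homomorphism identity $\varphi_0(\widetilde{x}_i\cdot\widetilde{x}_i')=\varphi_0(\widetilde{x}_i)\cdot\varphi_0(\widetilde{x}_i')$ gives
\begin{eqnarray*}
x=\varphi_0(\widetilde{x})=\sum_i \varphi_0(\widetilde{x}_i)\cdot\varphi_0(\widetilde{x}_i')\in\fa_0\cdot\fa_0,
\end{eqnarray*}
so $\fa_0\subseteq\fa_0\cdot\fa_0$ and equality holds.

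The remaining identities follow the identical template, each time invoking the appropriate compatibility of $\varphi$ with the structural operations. For $\fa_0=[\fa_1,\fa_1]$, I lift $x\in\fa_0$ to $\widetilde{x}\in\widetilde{\fa}_0=[\widetilde{\fa}_1,\widetilde{\fa}_1]$, write it as a sum of brackets, and use $\varphi_0([\widetilde{y}_i,\widetilde{y}_i'])=[\varphi_1(\widetilde{y}_i),\varphi_1(\widetilde{y}_i')]$. For $\fa_1=\fa_0\cdot\fa_1$, I lift $y\in\fa_1$ to $\widetilde{y}\in\widetilde{\fa}_1=\widetilde{\fa}_0\cdot\widetilde{\fa}_1$ and apply $\varphi_1(\widetilde{x}_i\cdot\widetilde{y}_i)=\varphi_0(\widetilde{x}_i)\cdot\varphi_1(\widetilde{y}_i)$, using surjectivity of $\varphi_1$ for the initial lift.

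I do not expect any serious obstacle here: the argument is purely formal, resting on surjectivity and the definition of a homomorphism. The one point requiring mild care is that perfectness is phrased as equalities of spanned subspaces, so each lifted element must be treated as a \emph{finite sum} of products rather than a single product; linearity of $\varphi_0$ and $\varphi_1$ makes passing the sum through transparent, and the reverse inclusions are automatic from the grading. Hence $\fa$ is perfect.
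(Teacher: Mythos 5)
Your proof is correct and is essentially the same as the paper's: the paper writes the argument in one line as equalities of images, $\fa_0=\varphi_0(\widetilde{\fa}_0)=\varphi_0(\widetilde{\fa}_0\cdot\widetilde{\fa}_0)=\varphi_0(\widetilde{\fa}_0)\cdot\varphi_0(\widetilde{\fa}_0)=\fa_0\cdot\fa_0$ and similarly for the other two identities, which is exactly your lift-and-push-forward argument with the finite sums left implicit. Your added care about spanned subspaces versus single products is a reasonable elaboration but does not change the substance.
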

\begin{proof}
Since $(\widetilde{\fa},\widetilde{\a},\widetilde{\b})$ is a perfect  Hom-Lie antialgebra and $\varphi:\widetilde{\fa}\rightarrow\fa$  be a surjective homomorphism, we get
 $\fa_0=\varphi(\widetilde{\fa}_0) = \varphi(\widetilde{\fa}_0\cdot \widetilde{\fa}_0)
 = \varphi(\widetilde{\fa}_0)\cdot \varphi(\widetilde{\fa}_0)=\fa_0\cdot \fa_0$. Similarly we have
$\fa_0 =\varphi([\widetilde{\fa}_1, \widetilde{\fa}_1])=[\varphi(\widetilde{\fa}_1), \varphi(\widetilde{\fa}_1)]=[\fa_1, \fa_1]$ and $\fa_1=\varphi(\widetilde{\fa}_1) = \varphi(\widetilde{\fa}_0\cdot \widetilde{\fa}_1)= \varphi(\widetilde{\fa}_0)\cdot \varphi(\widetilde{\fa}_1)=\fa_0\cdot \fa_1$. Thus $(\fa,\a,\b)$ is also a perfect  Hom-Lie antialgebra.
\end{proof}

\begin{lemma} \label{lemma2}
Let $\xymatrix@C=0.5cm{0 \ar[r]^{} & V  \ar[r]^{i } &\widetilde{\fa} \ar[r]^{\pi} & \fa  \ar[r]^{} & 0 }$ be a central extension and $\widetilde{\fa}$ be a perfect Hom-Lie antialgebra. Then there exists at most one homomorphism from (CE) to a second central extension of $\fa$.
\end{lemma}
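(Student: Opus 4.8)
The plan is to adapt the classical uniqueness-of-lift argument for perfect Lie algebras to the three bilinear operations of a Hom-Lie antialgebra. Write the second central extension as $0 \to W \xrightarrow{\widehat{i}} \widehat{\fa} \xrightarrow{\widehat{\pi}} \fa \to 0$, and suppose $\varphi,\psi:\widetilde{\fa}\to\widehat{\fa}$ are two homomorphisms of Hom-Lie antialgebras, each a morphism of extensions over $\fa$, i.e.\ $\widehat{\pi}\circ\varphi=\pi=\widehat{\pi}\circ\psi$. Set $f=\varphi-\psi=(f_0,f_1)$, a pair of grade-preserving linear maps. The first observation is that $\widehat{\pi}\circ f=0$, so $\mathrm{im}(f)\subseteq\Ker\widehat{\pi}=\widehat{i}(W)$, which by definition of central extension lies in the center $Z(\widehat{\fa})$; thus every value of $f$ has vanishing product with every element of $\widehat{\fa}$.

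Next I would show that $f$ annihilates each of the three structural products. Writing $\varphi(\tilde z)=\psi(\tilde z)+f(\tilde z)$ and using that $\varphi$ is a homomorphism, for $\tilde x_1,\tilde x_2\in\widetilde{\fa}_0$ we have
$$\varphi_0(\tilde x_1\cdot\tilde x_2)=\varphi_0(\tilde x_1)\cdot\varphi_0(\tilde x_2)=\bigl(\psi_0(\tilde x_1)+f_0(\tilde x_1)\bigr)\cdot\bigl(\psi_0(\tilde x_2)+f_0(\tilde x_2)\bigr).$$
Since the values of $f_0$ are central, the three cross terms vanish, leaving $\psi_0(\tilde x_1)\cdot\psi_0(\tilde x_2)=\psi_0(\tilde x_1\cdot\tilde x_2)$, hence $f_0(\tilde x_1\cdot\tilde x_2)=0$. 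The identical expansion applied to the mixed product $\tilde x_1\cdot\tilde y_1$ (using the homomorphism property for that operation) gives $f_1(\tilde x_1\cdot\tilde y_1)=0$, and applied to the bracket $[\tilde y_1,\tilde y_2]$ gives $f_0([\tilde y_1,\tilde y_2])=0$.

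Finally I would invoke perfectness of $\widetilde{\fa}$: since $\widetilde{\fa}_0=\widetilde{\fa}_0\cdot\widetilde{\fa}_0$ and $\widetilde{\fa}_1=\widetilde{\fa}_0\cdot\widetilde{\fa}_1$, the previous step forces $f_0=0$ on $\widetilde{\fa}_0$ and $f_1=0$ on $\widetilde{\fa}_1$, so $f=0$ and $\varphi=\psi$. I do not expect a genuine obstacle: the argument is a bookkeeping exercise across the three product types, and the only point requiring care is to confirm that centrality of $\mathrm{im}(f)$ kills every cross term in each bilinear operation, which is immediate from the description of $Z(\widehat{\fa})$ as the set of elements whose products with all of $\widehat{\fa}$ vanish. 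Note also that the compatibility of $\varphi$ and $\psi$ with $\widetilde{\a},\widetilde{\b}$ plays no role here and need not be used.
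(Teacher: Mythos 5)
Your argument is correct and is essentially the paper's own proof: both show that the difference $\varphi-\psi$ takes values in the center (because both maps lift $\pi$ through $\widehat{\pi}$), deduce from centrality that the difference kills all three structural products, and conclude by perfectness of $\widetilde{\fa}$. The only cosmetic difference is that you expand $(\psi+f)\cdot(\psi+f)$ and discard central cross terms, whereas the paper uses the equivalent telescoping identity $\varphi(a)\cdot\varphi(b)-\psi(a)\cdot\psi(b)=(\varphi(a)-\psi(a))\cdot\varphi(b)+\psi(a)\cdot(\varphi(b)-\psi(b))$; you also state the final appeal to perfectness explicitly, which the paper leaves implicit.
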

\begin{proof}
 Assume there are two homomorphisms of Hom-Lie antialgebras
$\varphi, \varphi^{\prime}: \widetilde{\fa}\to \widehat{\fa}$. For $y_1,y_2\in \widetilde{\fa}_1$, we have
$$
\begin{aligned}\left(\varphi-\varphi^{\prime}\right)(x_1\cdot x_2)
&=\varphi(x_1\cdot x_2)-\varphi^{\prime}(x_1\cdot x_2) \\
&=\varphi(x_1)\cdot \varphi(x_2)-\varphi^{\prime}(x_1)\cdot \varphi^{\prime}(x_2) \\
&=(\varphi(x_1)-\varphi^{\prime}(x_1))\cdot \varphi(x_2)+\varphi^{\prime}(x_1)\cdot (\varphi(x_2)-\varphi^{\prime}(x_2))\\
&=0,
\end{aligned}
$$
$$
\begin{aligned}\left(\varphi-\varphi^{\prime}\right)(x_1\cdot y_1)
&=\varphi(x_1\cdot y_1)-\varphi^{\prime}(x_1\cdot y_1) \\
&=\varphi(x_1)\cdot \varphi(y_1)-\varphi^{\prime}(x_1)\cdot \varphi^{\prime}(y_1) \\
&=(\varphi(x_1)-\varphi^{\prime}(x_1))\cdot \varphi(y_1)+\varphi^{\prime}(x_1)\cdot (\varphi(y_1)-\varphi^{\prime}(y_1))\\
&=0,
\end{aligned}
$$
$$
\begin{aligned}\left(\varphi-\varphi^{\prime}\right)([y_1, y_2])
&=\varphi([y_1, y_2])-\varphi^{\prime}([y_1, y_2]) \\
&=[\varphi(y_1), \varphi(y_2)]-\left[\varphi^{\prime}(y_1), \varphi^{\prime}(y_2)\right] \\
&=\left[\varphi(y_1)-\varphi^{\prime}(y_1), \varphi(y_2)\right]+\left[\varphi^{\prime}(y_1), \varphi(y_2)-\varphi^{\prime}(y_2)\right]\\
&=0,
\end{aligned}
$$
where the last equality follows from the fact that $\varphi(x)-\varphi^{\prime}(x),\varphi(y)-\varphi^{\prime}(y)\in Z(\fa)$ for all $x\in \fa_0, y\in \fa_1$.
\end{proof}

Using classical arguments, we can see that a universal central extension, when it exists, is unique up to an isomorphism.

\begin{lemma} \label{lemma3}
If $\xymatrix@C=0.5cm{0 \ar[r]^{} & V  \ar[r]^{i } &\widetilde{\fa} \ar[r]^{\pi} & \fa  \ar[r]^{} & 0 }$ is a universal central extension, then  $\widetilde{\fa}$ and $\fa$ are perfect Hom-Lie antialgebras.
\end{lemma}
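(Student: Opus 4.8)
The plan is to follow the classical strategy for universal central extensions, adapted to the $\bbZ_2$-graded setting. First I would prove that $\widetilde{\fa}$ is perfect, and then deduce that $\fa$ is perfect by applying Lemma \ref{lemma1} to the surjection $\pi$.

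To show that $\widetilde{\fa}$ is perfect, I argue by contradiction. Suppose $\widetilde{\fa}\neq[[\widetilde{\fa},\widetilde{\fa}]]$ and write $K=[[\widetilde{\fa},\widetilde{\fa}]]$ for the ideal generated by all products. Since $K$ is an ideal, it is in particular invariant under $\widetilde{\a},\widetilde{\b}$, so the quotient $Q=\widetilde{\fa}/K$ inherits the structure of a Hom-Lie antialgebra with all three structural operations equal to zero, equipped with the induced maps $\bar{\widetilde{\a}},\bar{\widetilde{\b}}$. By assumption $Q\neq 0$.

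Next I form the direct sum Hom-Lie antialgebra $\widetilde{\fa}\oplus Q$ and consider the projection $p:\widetilde{\fa}\oplus Q\to\fa$, $(x,q)\mapsto\pi(x)$. Its kernel is $V\oplus Q$, which is central because $V$ lies in $Z(\widetilde{\fa})$ by hypothesis and $Q$ has vanishing products, hence is central in the direct sum; thus $0\to V\oplus Q\to\widetilde{\fa}\oplus Q\xrightarrow{p}\fa\to 0$ is a central extension of $\fa$. I then exhibit two lifts of $\pi$: the inclusion $\varphi_1(x)=(x,0)$ and the map $\varphi_2(x)=(x,\bar x)$ with $\bar x=x+K$. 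The key point is that $\varphi_2$ is a homomorphism: for each of the three structural products the output lies in $K$, so its image in $Q$ vanishes, matching the zero $Q$-component of the corresponding product in $\widetilde{\fa}\oplus Q$; compatibility with $\widetilde{\a},\widetilde{\b}$ follows from the definition of the induced maps on $Q$. Both satisfy $p\circ\varphi_i=\pi$, yet $\varphi_1\neq\varphi_2$ since $Q\neq 0$ provides some $x$ with $\bar x\neq 0$. This contradicts the uniqueness in the universal property, so $\widetilde{\fa}$ must be perfect. Since $\pi$ is a surjective homomorphism, Lemma \ref{lemma1} then gives that $\fa$ is perfect as well.

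I expect the main obstacle to be the bookkeeping rather than any conceptual difficulty: verifying that $\widetilde{\fa}\oplus Q\to\fa$ is a genuine central extension, by combining the centrality of $V$ with the vanishing products of $Q$ across all three operations, and checking operation by operation (together with the two structure maps) that $\varphi_2$ is indeed a homomorphism of Hom-Lie antialgebras. One should also record that the comparison extension in the universal property is permitted to have a kernel different from $V$, here the larger kernel $V\oplus Q$.
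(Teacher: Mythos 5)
Your proof is correct and follows essentially the same route as the paper: reduce to perfectness of $\widetilde{\fa}$ via Lemma \ref{lemma1}, then, assuming $[[\widetilde{\fa},\widetilde{\fa}]]\varsubsetneq\widetilde{\fa}$, build a central extension with an abelian summand $\widetilde{\fa}/[[\widetilde{\fa},\widetilde{\fa}]]$ and exhibit two distinct lifts of $\pi$, contradicting uniqueness. The only difference is cosmetic: the paper uses the comparison extension $W\oplus\fa\to\fa$ with lifts $x\mapsto(\bar x,\pi(x))$ and $x\mapsto(0,\pi(x))$, whereas you use $\widetilde{\fa}\oplus Q\to\fa$; both rest on the same (tacit) facts that $[[\widetilde{\fa},\widetilde{\fa}]]$ is invariant under the structure maps and that the comparison extension in the universal property may have a kernel other than $V$.
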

\begin{proof}
By Lemma \ref{lemma1} we have to proof that $\wt{\fa}$ is perfect.
Let us assume that $\wt{\fa}$ is not perfect, then $[[\wt{\fa},\wt{\fa}]] \varsubsetneq \wt{\fa}$. Let $W=\wt{\fa}/[[\wt{\fa},\wt{\fa}]]$. Let us consider the central extension $\xymatrix@C=0.5cm{0 \ar[r]^{} & W  \ar[r]^{i } &{W\oplus \fa} \ar[r]^{\wt{\pi}} & \fa  \ar[r]^{} & 0 }$ where $\wt{\pi}:W\oplus \fa \to \fa$ is the projection. Then we have the  homomorphisms of Hom-Lie antialgebras
$\varphi, \psi : \wt{\fa}\to W\oplus \fa$ given by $\varphi(x)=(x+[[\wt{\fa},\wt{\fa}]],\pi(x))$ and $\psi(x)=(0,\pi(x)), x\in \wt{\fa}$. One can verify that  $\wt{\pi}\circ \phi = \pi = \wt{\pi}\circ \psi$.
Thus $\xymatrix@C=0.5cm{0 \ar[r]^{} & V  \ar[r]^{i } &\widetilde{\fa} \ar[r]^{\pi} & \fa  \ar[r]^{} & 0 }$ cannot be a universal central extension. This is a contradiction.
\end{proof}

\begin{thm}\label{teorema}\
A Hom-Lie antialgebra $(\fa,\a,\b)$ admits a universal central extension if and only if $(\fa,\a,\b)$ is perfect.
Moreover, the kernel of the universal central extension is canonically isomorphic to the second homology group $H_2(\fa)$.
\end{thm}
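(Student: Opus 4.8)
The plan is to prove the two implications separately and to extract the kernel from an explicit model built out of the chain complex of Section~\ref{sect:cohomology}. The \emph{only if} direction is already in hand: it is precisely Lemma~\ref{lemma3}, which shows that the existence of a universal central extension forces $\fa$ (and the covering algebra) to be perfect. So all the real work lies in the converse together with the identification of the kernel.

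For the \emph{if} direction, assume $(\fa,\a,\b)$ is perfect and construct a canonical candidate. I would set $\mathrm{uce}(\fa):=C_2(\fa)/\mathrm{Im}\,d_3=(\fa\otimes\fa)/\mathrm{Im}\,d_3$, with the $\bbZ_2$-grading inherited from $\fa\otimes\fa$, and let $\overline{d_2}:\mathrm{uce}(\fa)\to\fa$ be the map induced by $d_2$; this is well defined because $d_2\circ d_3=0$ (Proposition~\ref{prop212}). The structural operations are defined so as to depend only on $d_2$-images: for classes $P,Q$ I set $P\cdot Q:=\overline{d_2(P)\otimes d_2(Q)}$, read off by parity (the product of two even classes and the bracket of two odd classes land in the even block, while an even-times-odd product lands in the odd block), and I define $\widetilde{\a},\widetilde{\b}$ blockwise by $\a\otimes\a$, $\b\otimes\b$, $\a\otimes\b$, $\b\otimes\a$. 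Well-definedness modulo $\mathrm{Im}\,d_3$ is immediate from $d_2\circ d_3=0$. The key observation, and the technical heart of the argument, is that the four axioms \eqref{hanti01}--\eqref{hanti04} for $\mathrm{uce}(\fa)$ reduce, after writing $p=d_2(P)$ etc., to the statement that each defect is exactly $d_3$ of a $3$-chain: for instance \eqref{hanti01} becomes $\a(p)\otimes(q\cdot r)-(p\cdot q)\otimes\a(r)=d_3(p\otimes q\otimes r)\in\mathrm{Im}\,d_3$, and the remaining three axioms match the remaining three components of $d_3$ in the same way, so every axiom holds in the quotient. The main obstacle is precisely this bookkeeping, namely matching the four components of $d_3$ with the four defining identities; it is also here, and in checking that $\widetilde{\a},\widetilde{\b}$ descend and commute with $\overline{d_2}$, that multiplicativity of $\a,\b$ is used.

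Next I would verify that $0\to H_2(\fa)\to\mathrm{uce}(\fa)\xrightarrow{\overline{d_2}}\fa\to 0$ is a central extension: surjectivity of $\overline{d_2}$ is exactly perfectness ($\mathrm{Im}\,d_2=\fa$), its kernel is $\Ker d_2/\mathrm{Im}\,d_3=H_2(\fa)$, and centrality follows because $P\in\Ker\overline{d_2}$ gives $P\cdot Q=\overline{0\otimes d_2(Q)}=0$. I would also check that $\mathrm{uce}(\fa)$ is itself perfect, which is needed to invoke uniqueness later: any generator $\overline{a\otimes b}$ equals $P\cdot Q$ for any $P,Q$ with $d_2(P)=a$, $d_2(Q)=b$, and such $P,Q$ exist in each parity because $d_2$ is surjective.

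Finally, for universality, given any central extension $\widehat{\pi}:\widehat{\fa}\to\fa$ with central kernel $V'$, I would choose a grade-preserving linear section $s$ and define $\varphi(\overline{a\otimes b}):=s(a)\cdot s(b)$ (product or bracket according to parity). Independence of the choice of $s$ and the vanishing of $\varphi$ on $\mathrm{Im}\,d_3$ both follow from centrality of $V'$: differences $s(a)-s'(a)\in V'$ are absorbed into products, and $\varphi(d_3(\cdots))=0$ is the corresponding Hom-Lie antialgebra axiom in $\widehat{\fa}$ after discarding central corrections. One then checks $\widehat{\pi}\circ\varphi=\overline{d_2}$ and that $\varphi$ is a homomorphism, central terms again dropping out of all products; uniqueness of $\varphi$ is Lemma~\ref{lemma2}, applicable because $\mathrm{uce}(\fa)$ is perfect. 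This exhibits $\mathrm{uce}(\fa)$ as the universal central extension, with kernel $H_2(\fa)$ by construction, and the uniqueness of the universal object up to isomorphism makes the identification canonical.
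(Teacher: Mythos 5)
Your proposal is correct and follows essentially the same route as the paper: both take $\frak{uce}(\fa)=(\fa\otimes\fa)/\mathrm{Im}\,d_3$ with structural operations factoring through $d_2$, deduce the Hom-Lie antialgebra axioms from the fact that each defect lies in $\mathrm{Im}\,d_3$, get surjectivity from perfectness, identify the kernel with $\Ker d_2/\mathrm{Im}\,d_3=H_2(\fa)$, and prove universality via a linear section with uniqueness from Lemma~\ref{lemma2} after checking $\frak{uce}(\fa)$ is perfect. The one small point to add is that the paper's ideal $I_\fa$ also contains the symmetrization elements $x_1\otimes x_2-x_2\otimes x_1$, $x_1\otimes y_1-y_1\otimes x_1$, $y_1\otimes y_2+y_2\otimes y_1$ (not literally in $\mathrm{Im}\,d_3$), which you need to quotient out as well so that $\frak{uce}(\fa)$ is supercommutative.
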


\begin{proof}
 We already know that if the Hom-Lie algebra $\fa$ admit a universal central extension, then $\fa$ is perfect.
 Conversely, for a Hom-Lie antialgebra $(\fa,\a,\b)$, we construct a universal central extension as follows.

First, we construct a central extension for a given Hom-Lie algebra $\fa$.
Let $I_\fa$ be the  subspace of $\fa\otimes \fa$ spanned by the elements of the form
%%%%%%%%%%%%%%%%%%%%%%%%%%%%%%%%
\begin{eqnarray*}
x_1\otimes x_2-x_2\otimes x_1,\ \a(x_1)\otimes x_2\cdot x_3-x_1\cdot x_2 \otimes \a(x_3)\in \fa_0\otimes \fa_0,\\
x_1\otimes y_1-y_1\otimes x_1,\ \a(x_1) \otimes  x_2\cdot y_1-\half x_1\cdot x_2 \otimes \b(y_1)\in\fa_0\otimes \fa_1,\\
y_1\otimes y_2+y_2\otimes y_1\in\fa_1\otimes \fa_1,\\
\a(x_1) \otimes [y_1,y_2]-x_1\cdot y_1  \otimes  \b(y_2)-\b(y_1) \otimes x_1\cdot y_2\in\fa_0\otimes \fa_0+\fa_1\otimes \fa_1,\\
\b(y_1)\otimes [y_2,y_3] +\b(y_2)\otimes[y_3,y_1]+\b(y_3)\otimes[y_1,y_2]\in\fa_1\otimes \fa_0.
\end{eqnarray*}
 That is, $I_\fa = {\rm Im} d_3$.

We denote the quotient space  $\mathfrak{U}=\frac{\fa \otimes \fa}{I_\fa}$  by $\frak{uce}(\fa)$. Elements in this quotient space of the form $x_1 \cdot x_2 + I_\fa$, $x_1 \cdot y_1 + I_\fa$ and $[y_1, y_2] + I_\fa$ are denoted by $x_1\ast x_2\in\frak{uce}(\fa)_0$, $x_1\ast y_1\in\frak{uce}(\fa)_1$ and $\{y_1,y_2\}\in\frak{uce}(\fa)_0$.

By construction, we have $x_1\ast x_2=x_2\ast x_1,\ x_1\ast y_1=y_1\ast x_1, \{y_1, y_2\}=-\{y_2, y_1\}$ and the following identities holds:
\begin{eqnarray}
\label{uce-cocyle1}
\a(x_1)\ast (x_2\cdot x_3)-(x_1\cdot x_2) \ast \a(x_3)=0,\\
\label{uce-cocyle2}
\a(x_1) \ast(x_2\cdot y_1)-\half (x_1\cdot x_2) \ast \b(y_1)=0,\\
\label{uce-cocyle3}
\a(x_1)\ast [y_1,y_2]-\{x_1\cdot y_1, \b(y_2)\}-\{\b(y_1), x_1\cdot y_2\}=0,\\
\label{uce-cocyle4}
\{\beta(y_1),[y_2,y_3]\} +  \{\beta(y_2),[y_3,y_1]\}+\{\beta(y_3),[y_1,y_2]\}=0,
\end{eqnarray}
for all $x_1, x_2, x_3 \in \fa_0$ and $y_1, y_2, y_3 \in \fa_1$.

Note that if we define $\omega_0:\fa_0\times \fa_0\to \frak{uce}(\fa)_0,\,\omega_1:\fa_0\times \fa_1\to \frak{uce}(\fa)_1,\,\omega_2:\fa_1\times \fa_1\to \frak{uce}(\fa)_0$ by $\omega_0(x_1,x_2)=x_1\ast x_2,\,\omega_1(x_1,y_1)=x_1\ast y_1,\,\omega_2(y_1,y_2)=\{y_1,y_2\}$,
then the above identities \eqref{uce-cocyle1}--\eqref{uce-cocyle4} means that $\omega=(\omega_0,\omega_1,\omega_2)$ is a 2-cocycle of $\fa$ with coefficients in $\frak{uce}(\fa)$.
%Thus we get a central extension
%$$\xymatrix@C=0.5cm{0 \ar[r]^{} & \frak{uce}(\fa)  \ar[rr]^{i} &&\fa\oplus \frak{uce}(\fa) \ar[rr]^{\pi} && \fa  \ar[r]^{} & 0 }$$

Since $d_2(I_\fa)=d_2(d_3(\fa\otimes\fa\otimes\fa))=0$, so it induces a linear map $u: \frak{uce}(\fa) \to \fa$, given by
\begin{eqnarray*}
u_0(x_1\ast x_2)&=&x_1\cdot x_2,\\
u_1(x_1\ast y_1)&=&x_1\cdot y_1,\\
u_0(\{y_1,y_2\})&=&[y_1,y_2].
\end{eqnarray*}
Moreover, if we define $\widetilde{\alpha}, \widetilde{\beta}: \frak{uce}(\fa) \to \frak{uce}(\fa)$ by $\widetilde{\alpha}(x_1\ast x_2)=\alpha (x_1)\ast \alpha (x_2)$, $\widetilde{\alpha}(\{y_1,y_2\}) = \{\beta(y_1), \beta(y_2)\}$, $\widetilde{\beta}(x_1\ast y_1)={\alpha}(x_1)\ast {\beta}(y_1)$, then $(\frak{uce}(\fa), \widetilde{\alpha}, \widetilde{\beta})$ is a  Hom-Lie antialgebra with respect to the following structural operations:
\begin{eqnarray*}
 (x_1\ast x_2)\cdot (x_3\ast x_4)&=& (x_1\cdot x_2) \ast (x_3 \cdot x_4),\\
 \{y_1, y_2\}\cdot\{y_3,y_4\}&=&[y_1, y_2]\ast [y_3, y_4],\\
  (x_1\ast x_2)\cdot\{y_1,y_2\}&=& (x_1\cdot x_2)\ast [y_1,y_2],\\
  (x_1\ast x_2)\cdot(x_3\ast y_1)&=&(x_1\cdot x_3)\ast(x_3\cdot y_1),\\
  \{y_1,y_2\}\cdot (x_1\ast y_3)&=&[y_1,y_2]\ast (x_1\cdot y_3),\\
 {[x_1\ast y_1,x_2\ast y_2]}&=& \{x_1\cdot y_1, x_2\cdot y_2\}.
 \end{eqnarray*}
It is easy to see that $u  : \frak{uce}(\fa) \to \fa$ is a homomorphism of Hom-Lie antialgebras.
Since  the image of $u$ is contained in $(\fa_0\cdot \fa_0+\fa_0\cdot \fa_1)\oplus [\fa_1,\fa_1]$ and $(\fa,\alpha,\beta)$ is a perfect Hom-Lie antialgebra, so $u $ is a surjective homomorphism.

Second, we verify that this central extension is universal.
Now for any central extension  $\xymatrix@C=0.5cm{0 \ar[r]^{} & V  \ar[r]^{i } & U \ar[r]^{\pi} & \fa  \ar[r]^{} & 0 }$,
we choose a section $s: \fa \to U$ of $\pi$ such that $\pi\circ s = id_\fa$.
Using that $U$ is a Hom-Lie antialgebra and $s(x_1\cdot x_2)-s(x_1)\cdot s(x_2),\,
s(x_1\cdot y_1-s(x_1)\cdot s(y_1),\, s([y_1,y_2])-[s(y_1), s(y_2)]\in Z(U)$,
one verifies that this map annihilates $I_\fa$. Thus we obtain a linear map
$\varphi: {\mathfrak{uce}}(\fa) \rightarrow U$ by
$$
\begin{aligned}
 \varphi(x_{1}\ast x_{2})&= s\left(x_{1}\right)\cdot s\left(x_{2}\right),\\
 \varphi(x_{1}\ast y_{1})&= s\left(x_{1}\right)\cdot s\left(y_{1}\right),\\
\varphi(\left\{y_{1}, y_{2}\right\})&=\left[s\left(y_{1}\right), s\left(y_{2}\right)\right].
\end{aligned}
$$
By direct computations, we get
$$
\begin{aligned}
\varphi\big((x_{1}\ast x_{2})\cdot(x_{3}\ast x_{4})\big)
&=\varphi\big((x_{1}\cdot x_{2})\ast(x_{3}\cdot x_{4})\big)\\
&=s(x_{1}\cdot x_{2})\cdot s(x_{3}\cdot x_{4}) \\
&=\big(s(x_{1})\cdot s(x_{2})\big)\cdot \big(s(x_{3})\cdot s(x_{4})\big)\\
&=\varphi(x_{1}\ast x_{2})\cdot \varphi(x_{3}\ast x_{4}),
\end{aligned}
$$
$$
\begin{aligned}
\varphi\big((x_{1}\ast x_{2})\cdot\left\{ y_{1}, y_{2}\right\}\big)
&=\varphi\big((x_{1}\cdot x_{2})\ast\left[y_{1}, y_{2}\right]\big)\\
&=s(x_{1}\cdot x_{2})\cdot s([y_{1}, y_{2}])\\
&=\big(s(x_{1})\cdot s(x_{2})\big)\cdot\left[s\left(y_{1}\right), s\left(y_{2}\right)\right]\\
&=\varphi\left(x_{1}\ast x_{2}\right)\cdot \varphi\left(\left\{ y_{1}, y_{2}\right\}\right),
\end{aligned}
$$
$$
\begin{aligned}
\varphi\big([x_{1}\ast y_{1},x_{2}\ast y_{2}]\big)
&=\varphi\left\{x_1\cdot y_1, x_2\cdot y_2\right\}\\
&=\left[s(x_1\cdot y_1), s(x_2\cdot y_2)\right] \\
&=\left[s\left(x_{1}\right)\cdot s\left(y_{1}\right), s\left(x_{2}\right)\cdot s\left(y_{2}\right)\right]\\
&=[\varphi(x_{1}\ast y_{1}),\varphi(x_{2}\ast y_{2})].
\end{aligned}
$$
Similarly, we obtain
 $$\varphi\big(\{y_1, y_2\}\cdot\{y_3,y_4\}\big)=\varphi\big(\{y_1, y_2\}\big)\cdot \varphi\big(\{y_3,y_4\}\big),$$
 $$\varphi\big((x_1\ast x_2)\cdot(x_3\ast y_1)\big)=\varphi\big((x_1\ast x_2)\big)\cdot \varphi\big((x_3\ast y_1)\big),$$
  and
 $$\varphi\big(\{y_1,y_2\}\cdot (x_1\ast y_3)\big)=\varphi\big(\{y_1,y_2\}\big)\cdot \varphi\big((x_1\ast y_3)\big).$$
 Thus $\varphi: {\mathfrak{uce}}(\fa) \rightarrow U$ is a Hom-Lie antialgebra homomorphism.
%$$\begin{aligned}
%\varphi\left[\left\{ y_{1}, y_{2}\right\},\left\{ y_{3}, y_{4}\right\}\right]
%&=\varphi\left\{\left[y_{1}, y_{2}\right],\left[y_{3}, y_{4}\right]\right\}\\
%&=\left[s\left[y_{1}, y_{2}\right], s\left[y_{3}, y_{4}\right]\right] \\
%&=\left[s\left(y_{1}\right), s\left(y_{2}\right)\right]\cdot\left[s\left(y_{3}\right), s\left(y_{4}\right)\right]\\
%&=\varphi\left(\left\{ y_{1}, y_{2}\right\}\right)\cdot \varphi\left(\left\{ y_{3}, y_{4}\right\}\right)
%\end{aligned}$$

Moreover, we have
$$\begin{aligned}
\pi\circ\varphi\big(x_{1}\ast x_{2}\big)
&= \pi\big(s\left(x_{1}\right)\cdot s\left(x_{2}\right)\big)\\
&=\pi\left(s\left(x_{1}\right)\right)\cdot \pi\left(s\left(x_{2}\right)\right)\\
&=x_{1}\cdot x_{2}=u\big(x_{1}\ast x_{2}\big),
\end{aligned}$$
$$\begin{aligned}
\pi\circ\varphi\big(x_{1}\ast y_{1}\big)
&= \pi\big(s\left(x_{1}\right)\cdot s\left(y_{1}\right)\big)\\
&=\pi\left(s\left(x_{1}\right)\right)\cdot \pi\left(s\left(y_{1}\right)\right)\\
&=x_{1}\cdot y_{1}=u\big(x_{1}\ast y_{1}\big),
\end{aligned}$$
$$\begin{aligned}
(\pi \circ \varphi)\big(\left\{ y_{1}, y_{2}\right\}\big)
&= \pi\big(\left[s\left(y_{1}\right), s\left(y_{2}\right)\right]\big)\\
&=\left[\pi\left(s\left(y_{1}\right)\right), \pi\left(s\left(y_{2}\right)\right)\right]\\
&=\left[y_{1}, y_{2}\right]=u\big(\left\{ y_{1}, y_{2}\right\}\big),
\end{aligned}$$
which implies $\pi\circ \varphi= {u}$.

Finally, since $(\frak{uce}(\fa), \widetilde{\alpha})$ is a perfect Hom-Lie antialgebra, so by Lemma \ref{lemma2} we have $\varphi$ is unique.

From the above construction, it follows that the kernal of $u$ is  $H_2(\fa)$, so we have the central extension
%$$0 \to H_2(\fa) \stackrel{i}\longrightarrow  \frak{uce}(\fa) \stackrel{u}\longrightarrow \fa \to 0$$
$$\xymatrix@C=0.5cm{0 \ar[r]^{} & H_2(\fa)  \ar[rr]^{i } && \frak{uce}(\fa) \ar[rr]^{\pi} && \fa  \ar[r]^{} & 0 }.$$
%which is central, since $[{\rm Ker}\ u , \frak{uce}(\fa)] = 0$.
The proof is finished.
\end{proof}

It is interesting to study the lifting of automorphisms and derivations to central extensions, and to study the general theory of non-abelian tensor product of two Hom-Lie antialgebras. We leave these problems for further investigations.
%Since all these problems are more complicated than the classical cases, they will be left for further investigations.

\subsection*{Acknowledgements}
This research was supported by NSFC(11501179, 11961049).
%%%%%%%%%%%%%%%%%%%%%%%%%%%%%%%%%%%%%%%%%%%%%%%%%%

\end{document}